\newtheorem{remark}[theorem]{Remark}
\def \RR {{\mathbb R}}
\def \< {\langle}
\def \> {\rangle}
\def \RN {\mathbb{R}_+^N}
\def \Drm {\mathrm{D}}
\title{On the linearization of identifying the stored energy function of a hyperelastic material from full knowledge of the displacement field}
\author{
Julia Seydel\thanks{Department of Mathematics, Saarland University, PO Box 15 11 50, 66041 Saarbr\"ucken, Germany ({\tt julia.seydel@math.uni-sb.de}).} \and
Thomas Schuster\thanks{Department of Mathematics, Saarland University, PO Box 15 11 50, 66041 Saarbr\"ucken, Germany ({\tt thomas.schuster@num.uni-sb.de}),
correspondent author.}}
\begin{document}

\maketitle

\begin{abstract}
We consider the nonlinear, \emph{inverse problem} of computing the stored energy function of a hyperelastic material from the full knowledge of the displacement field.
The displacement field is described as solution of the nonlinear, dynamic, elastic wave equation, where the first Piola-Kirchhoff stress tensor is given as the gradient
of the stored energy function. We assume that we have a dictionary at hand such that the energy function is given as a conic combination of the dictionary's elements.
In that sense the mathematical model of the \emph{direct problem} is the nonlinear operator that maps the vector of expansion coefficients to the solution of the
hyperelastic wave equation. In this article we summarize some continuity results for this operator and deduce its Fr\'{e}chet derivative as well as the adjoint of this
derivative. Since the stored energy function encodes mechanical properties of the underlying, hyperelastic material, the considered inverse problem is of highest interest for 
structural health monitoring systems where defects are detected from boundary measurements of the displacement field. For solving the inverse problem iteratively by
the Landweber method or Newton type methods, the knowledge of the Fr\'{e}chet derivative and its adjoint is of utmost importance. 
\end{abstract}

\begin{keywords} 
stored energy function, displacement field, Cauchy's equation of motion, hyperelasticity, conic combination, Fr\'{e}chet derivative
\end{keywords}
\begin{AMS}
35L70, 65M32, 74B20
\end{AMS}

\pagestyle{myheadings}
\thispagestyle{plain}
\markboth{J. Seydel and T. Schuster}{On the linearization of identifying the stored energy function for hyperelastic materials}


\section{Introduction}

Starting point of our inverse problem is Cauchy's equation of motion for an elastic material of continuum mechanics (elastic wave equation)
\begin{equation}\label{cauchy}
\rho (x) \ddot{u}(t,x) - \nabla\cdot P(t,x) = f(t,x),\,
\end{equation}
where $t\in[0,T]$ denotes time and $x\in \Omega\subset\RR^3$ a point in a bounded, open domain $\Omega$ in $\RR^3$. Furthermore, $\rho(x)$ denotes the mass density in $x\in\Omega$, 
$f:[0,T]\times \Omega\to \RR^3$ is an external body force and $P: [0,T]\times \Omega\to \RR^{3\times 3}$ is the first Piola-Kirchhoff stress tensor. 
The vector field $u: [0,T]\times \Omega\to \RR^3$ is then the displacement vector, that the structure perceives under stress $P$ and external force $f$ in position $x\in\Omega$ at time $t\in[0,T]$.
We assume, that 
\begin{equation}\label{massendichte}
 0 < \inf_{x\in\Omega}\rho(x)=:\rho_{\min}\leq\rho(x)\leq\rho_{\max}:=\sup_{x\in\Omega} < \infty.
\end{equation}
\emph{Hyperelastic materials} are materials with $P(t,x)=\nabla_Y C(x,Ju(t,x))$ for a function $C=C(x,Y)$, $Y\in \RR^{3\times 3}$ with $\mathrm{det}\, Y>0$, which is the \emph{stored energy function}. Here, the derivative $\nabla_{Y}$ is to be understood componentwise and $Ju(t,x)=(\partial_{j}u_{i}(t,x))_{i,j=1,2,3}$ represents the displacement gradient in $t\in[0,T]$ and $x\in\Omega$.\\
Equipped with appropriate initial and boundary value conditions we get the system
\begin{equation}\label{cauchy-hyper}
\rho (x) \ddot{u}(t,x) - \nabla\cdot \nabla_Y C(x,Ju(t,x)) = f(t,x)\,
\end{equation}
for $t\in[0,T]$ and $x\in\Omega\subset\RR^3$ with given initial values
\begin{eqnarray}
\label{anfangswert1}
u(0,\cdot) &=& u_0\in H^{2}(\Omega,\RR^3),\\[1ex]
\label{anfangswert2}
\dot{u}(0,\cdot) &=& u_1\in H^{1}(\Omega,\RR^3)
\end{eqnarray}
and homogeneous Dirichlet boundary values 
\begin{equation}\label{randwert}
 u(t,x) = 0,\;\;t\in[0,T],\;\;x\in\partial\Omega.
\end{equation}
Equation (\ref{cauchy-hyper}) describes the behavior of hyperelastic materials such as e.g. carbon fibre reinforced composits (CFRC). That is why it plays a prominent role in materials science.
Inverse problems associated with equations (\ref{cauchy}) or (\ref{cauchy-hyper}) furthermore have applications in Structural Health Monitoring (SHM), see \cite{Giu08}. 
SHM systems consist of a number of actors that generate guided waves propagating through the structure and are measured by sensors which are applied to the structure's surface. The aim is the early detection of defects from the sensor measurements. Mathematically this leads to the inverse problem of computing material properties from boundary data of the displacement field. This is a nonlinear parameter identification problem for (\ref{cauchy}) or (\ref{cauchy-hyper}). In this article we specifically consider the inverse problem of reconstructing the stored energy function $C(x,Y)$ from the full displacement field $u (t,x)$. Since any defect of the structure affects $C$, the identification of a spatially variable $C$ might be an appropriate feature for damage detection of hyperelastic materials. We refer to standard textbooks like
\cite{CIARLET:88, Holzapfel200003, MARSDEN;HUGHES:83} for detailed introductions and derivations of the given equations.\\
There are numerous publications on inverse identification problems in elastic media for different settings. In \cite{HAEHNER:02} H\"ahner has analysed the problem of reconstructing the 
mass density in inhomogeneous, isotropic media from far field data. 
The linear sampling method, introduced by Colton and Kirsch in \cite{CK:1996} for detection of reverberant scatterers, was applied and implemented for the isotropic 
Navier Lam\'{e} equation by Bourgeois and others in \cite{BLL:2011}. The method describes a possibility to detect defects in isotropic materials and damages, which are represented by such a scatterer. Based on this the method was used in \cite{BL13} 
for the identification of cracks. Inverse problems are also object of \cite{BEYLKIN;BURRIDGE:90}. 
Sedipkov \cite{SEDIPKOV:11} considers the inverse problem to compute the acoustic impedance in an inhomogeneous, elastic medium from Cauchy data. An extensive investigation of inverse problems for acoustic and elastic waves is 
\cite{SPS84}, where problems of determining mechanical properties of inhomogeneous media as well as problems of identification the nature of a radiating wave source from boundary data are considered. The reconstruction of Lam\'{e} coefficients from Cauchy data for an isotropic material in 
2D and 3D is investigated in \cite{IUY:2012, IY:2011}. It was possible to obtain uniqueness results when Dirichlet data are available on a part of the boundary. The Lipschitz continuity of the Dirichlet-to-Neumann mapping in the case of isotropic, inhomogeneous materials could be demonstrated in the article \cite{BERETTA;ET;AL:2014}. 
Uniqueness results for anisotropic material tensors are contained in \cite{Kaltenbacher200711} for the case, that the material tensor can be represented as spatially constant conic combination of given tensors. 
The results of this paper have been extended essentially for the identification of material parameters from sensor data in \cite{SCHUSTER;WOESTEHOFF:14}. Important contributions related to uniqueness results for the identification of the Lam\'{e} constants in isotropic, inhomogeneous media for arbitrary dimensions from complete Cauchy data are the articles by Nakamura and Uhlmann \cite{NU94,NU95}. 
Uniqueness results for inverse problems for elastic, anisotropic media are also included in \cite{MR:2006}. An algorithm, that ensures both the conservation of the total energy and the conservation of momentum and angular momentum, is presented in \cite{ST92}. 
In \cite{BG04} uniqueness results are given for the determination of the shear modulus from a finite number of linearly independent displacement fields in two dimensions. 
The reconstruction of an anisotropic elasticity tensor from 
a finite number of displacement fields for the linear, stationary elasticity equation is the topic of \cite{BMU15}. A comprehensive overview of various inverse problems in the field of elasticity offers the article \cite{BC:2005}. 
In contrast to the publications mentioned before we consider the identification of stored energy functions, which are spatially variable, from time-dependent boundary data. The article \cite{KIRSCH;RIEDER:14} by Kirsch and Rieder can be seen in some sense as an analogon for the acoustic wave equation.\\
We specify the inverse problem  to be investigated in this article. Inspired by Kal\-ten\-ba\-cher and Lorenzi \cite{Kaltenbacher200711} and following the authors of \cite{SCHUSTER;WOESTEHOFF:14,WOESTEHOFF;SCHUSTER:15} we suppose to have a dictionary $\{C_1,C_2,\ldots,C_N\}$ consisting of functions\footnote{The specific choice of the functions $C_K$ is not subject of this article.} $C_K=C_K (x,Y)$, $K=1,\ldots,N$, given such that
\begin{equation}\label{con-comb}
 C (x,Y)  = \sum_{K=1}^{N}\alpha_{K} C_{K} (x,Y)
\end{equation}
with positive constants $\alpha_K> 0$, $K=1,\ldots, N$. In that way the searched function $C$ is projected onto a dictionary consisting of physically meaningful elements such as polyconvex functions, see \cite{Ball1977, Holzapfel200003}. The hyperelastic wave equation then is given as
\begin{equation}\label{cauchy-hyper-comb} 
 \rho(x)\ddot{u}(t,x) - \sum_{K=1}^{N}\alpha_{K}\nabla\cdot [\nabla_{Y}C_{K}(x,Ju(t,x))] = f(t,x)
\end{equation}
for $t\in[0,T]$ and $x\in\Omega\subset\RR^3$. We consider the following inverse problem:\\[1ex]
\textbf{(IP)}\quad Given $(f,u_0,u_1)$ as well as the displacement field $u(t,x)$ for $t\in [0,T]$ and $x\in \Omega$, compute the coefficients $\alpha=(\alpha_1,\ldots,\alpha_N)\in\RN$,
such that $u$ satisfies the initial boundary value problem (IBVP) (\ref{cauchy-hyper-comb}), (\ref{anfangswert1})--(\ref{randwert}).\\[1ex]
Denoting by $\mathcal{T} : \Drm (\mathcal{T})\subset \RN \to L^2 (0,T;H^1 (\Omega,\RR^3))$ the \emph{forward operator}, which maps a vector $\alpha\in\Drm (\mathcal{T}) $ to the unique solution of the IBVP (\ref{cauchy-hyper-comb}), (\ref{anfangswert1})--(\ref{randwert}) for $(f,u_0,u_1)$ fixed, then (IP) is just given as the nonlinear operator equation
\[   \mathcal{T} (\alpha) = u\,.   \]
Here, $\Drm (\mathcal{T})$ denotes the domain of $\mathcal{T}$ to be specified in Section 3. The article shows that $\mathcal{T}$ is Fr\'{e}chet differentiable for $\alpha\in \mathrm{int} \big(\Drm (\mathcal{T})\big)$ and gives representations for $\mathcal{T}' (\alpha)$ as well as for the adjoint operator $\mathcal{T}' (\alpha)^*$.\\[1ex]
\emph{Outline}. Section 2 provides all mathematical ingredients and tools which are necessary to prove the main results of the article. Particularly we summarize an existing uniqueness result
for the solution of the IBVP (\ref{cauchy-hyper-comb}), (\ref{anfangswert1})--(\ref{randwert}) (Theorem \ref{theorem21imp}). Section 3 represents the core of the article containing the main results.
First we deduce the G\^{a}teaux derivative of $\mathcal{T}$ (Lemma \ref{frechet-system}), show its continuity (Theorem \ref{gateaux-stetig}) and finally prove the Fr\'{e}chet differentiability 
(Theorem \ref{thm-glm-konv}). In Section 4 we furthermore derive a representation of the adjoint operator $\mathcal{T}' (\alpha)^*$ (Theorem \ref{Tadjungiert}). Section 5 concludes the article.\\


\section{Setting the stage}\label{secVor}


We start by recapitulating an existing uniqueness result for the IBVP (\ref{cauchy-hyper-comb}), (\ref{anfangswert1})--(\ref{randwert}) from \cite{WOESTEHOFF;SCHUSTER:15} as well as some
important estimates that we need to prove our main results.\\
We assume that $C_{K}:\;\Omega\times\RR^{3\times3}\to\;\RR$ satisfies the conditions $C_{K}(x,0)=0$ and $\nabla_{Y}C_{K}(x,0)=0$ for all $K=1,...,N$ and $x\in\Omega\subset\RR^3$. 
We restrict the nonlinearity of the functions $C_{K}$ and hence of $C$ by supposing, that there are positive constants $\kappa_{K}^{[0]},\;\kappa_{K}^{[1]},\;\mu_{K}^{[0]},\;\mu_{K}^{[1]}$ for $K=1,...,N$, such that 
\begin{equation}\label{bed1}
 \kappa_{K}^{[0]}\|Y\|_{F}^{2} \leq C_{K}(x,Y) \leq \mu_{K}^{[0]}\|Y\|_{F}^{2}
\end{equation}
and 
\begin{equation}\label{bed2}
 \kappa_{K}^{[1]}\|H\|_{F}^{2} \leq \langle\langle H|\nabla_{Y}\nabla_{Y}C_{K}(x,Y)H\rangle\rangle \leq \mu_{K}^{[1]}\|H\|_{F}^{2}
\end{equation}
hold for all $H,Y\in\RR^{3\times3}$ and for $x\in\Omega$ almost everywhere. By $\|\cdot\|_{F}$ we denote the Frobenius norm induced by the inner product of matrices 
$$\langle\langle A|B \rangle\rangle:=\mbox{tr}(A^{\top}B)\qquad \mbox{for }A,B\in\RR^{3\times3}.$$
Additionally we require the existence and boundedness of higher derivatives of $C_K$ with respect to $Y$. More specifically we assume, that there are constants $\mu_{K}^{[2]},...,\mu_{K}^{[7]}$ for $K=1,...,N$ with 
\begin{equation}\label{beschr1}
 \|\partial_{Y_{pq}}\partial_{Y_{ij}}\partial_{Y_{kl}}C_{K}\|_{L^{\infty}(\Omega\times\RR^{3\times3})}\leq\mu_{K}^{[2]}
\end{equation}
\begin{equation}\label{beschr2}
 \|\partial_{Y_{ab}}\partial_{Y_{pq}}\partial_{Y_{ij}}\partial_{Y_{kl}}C_{K}\|_{L^{\infty}(\Omega\times\RR^{3\times3})}\leq\mu_{K}^{[3]}
\end{equation}
\begin{equation}\label{beschr3}
 \|\partial_{l}\partial_{Y_{kl}}C_{K}\|_{L^{\infty}(\Omega\times\RR^{3\times3})}\leq\mu_{K}^{[4]}
\end{equation}
\begin{equation}\label{beschr4}
 \|\partial_{Y_{ij}}\partial_{l}\partial_{Y_{kl}}C_{K}\|_{L^{\infty}(\Omega\times\RR^{3\times3})}\leq\mu_{K}^{[5]}
\end{equation}
\begin{equation}\label{beschr5}
 \|\partial_{l}\partial_{Y_{ij}}\partial_{Y_{kl}}C_{K}\|_{L^{\infty}(\Omega\times\RR^{3\times3})}\leq\mu_{K}^{[6]}
\end{equation}
\begin{equation}\label{beschr6}
 \|\partial_{Y_{pq}}\partial_{l}\partial_{Y_{ij}}\partial_{Y_{kl}}C_{K}\|_{L^{\infty}(\Omega\times\RR^{3\times3})}\leq\mu_{K}^{[7]}
\end{equation}
for $a,b,i,j,k,l,p,q=1,2,3$ and $K=1,...,N$. Furthermore, we require
\begin{equation}\label{vertauschen-partial}
 \partial_{Y_{ij}}\partial_{l}\partial_{Y_{kl}}C(x,Y)=\partial_{l}\partial_{Y_{ij}}\partial_{Y_{kl}}C(x,Y)
\end{equation}
for all $i,j,k,l=1,2,3$, which holds true if e.g. $C\in\mathcal{C}^{4}(\Omega\times\RR^{3\times3})$. The mapping $Y\to C_{K}(x,Y)$ is supposed to be three times continuously differentiable for $x\in\Omega$
almost everywhere.\\ 
Furthermore, we restrict the set of admissible coefficient vectors $\alpha=(\alpha_{1},...,\alpha_{N})^{\top}\in\RR_+^N$ of the conic combination (\ref{con-comb}) by assuming 
\begin{eqnarray*}
 && \alpha\in \mathcal{C}((\kappa^{[a]})_{a=1,2},(\mu^{[b]})_{b=1,...,7})\\[1ex]
 &:=& \begin{Bmatrix}
      \alpha\in (0,\infty )^{N}:\sum_{K=1}^{N}\alpha_{K}\kappa_{K}^{[a]}\geq \kappa^{[a]},\;\sum_{K=1}^{N}\alpha_{K}\mu_{K}^{[b]}\leq \mu^{[b]}\\[1ex]
      \mbox{ for all}\; a=1,2\;\mbox{ and }\;b=1,...,7
     \end{Bmatrix}.
\end{eqnarray*}
This set is coupled to the nonlinearity conditions of $C_{K}$ (\ref{bed1})--(\ref{beschr6}) via the constants $\mu^{[b]}$.\\
Finally, we define the set of admissible solutions $u$ of IBVP (\ref{cauchy-hyper})--(\ref{randwert}). For given constants 
$M_{i}$, $i=0,...,3$ we set
\begin{eqnarray}\label{bedA}
 && \mathcal{A} := \mathcal{A}(M_{0},M_{1},M_{2},M_{3}) \nonumber \\[1ex]
 &&:= \begin{Bmatrix}
      u\in L^{\infty}((0,T)\times\Omega,\RR^3)\cap W^{1,\infty}((0,T),H^{1}(\Omega,\RR^3)):\\[1ex]
      \|\partial_{l}\partial_{j}u\|_{L^{\infty}((0,T),L^{2}(\Omega,\RR^3))} < M_{0},\;\|\partial_{l}\dot{u}_{k}\|_{L^{\infty}((0,T)\times\Omega)} < M_{1},\\[1ex]
      \|\partial_{l}\partial_{j}\dot{u}_{k}\|_{L^{\infty}((0,T)\times\Omega)} < M_{2},\;\|\partial_{l}\partial_{j}u_{k}\|_{L^{\infty}((0,T),\Omega)} < M_{3}\\[1ex]
      \mbox{for all}\; i,j,k,l=1,2,3
     \end{Bmatrix}.
\end{eqnarray}
If we e.g. assume, that $\partial\Omega$, $f$, $u_{0}$, $u_{1}$ and $C_{K}$ are sufficiently smooth, then $u\in\mathcal{A}$ holds true.\\
All these constraints are necessary to prove the following uniqueness result for the solution of the IBVP (\ref{cauchy-hyper-comb}), (\ref{anfangswert1})--(\ref{randwert}) for given
$\alpha\in \mathcal{C}((\kappa^{[a]})_{a=1,2},(\mu^{[b]})_{b=1,...,7})$, which has been presented in \cite{WOESTEHOFF;SCHUSTER:15}.\\
\begin{theorem}[{\cite[Theorem 2.1]{WOESTEHOFF;SCHUSTER:15}}]\label{theorem21imp}
 Let $u$, $\bar{u}$ be two solutions to the initial boundary value problem (\ref{cauchy-hyper-comb}), (\ref{anfangswert1})--(\ref{randwert}) corresponding to the parameters, initial values and right-hand sides $(\alpha, u_{0}, u_{1}, f)$ and $(\bar{\alpha}, \bar{u}_{0}, \bar{u}_{1}, \bar{f})$, respectively.  
 Furthermore, assume that $u,\bar{u}\in\mathcal{A}$. If, in addition, the condition
 \begin{equation}\label{dim}
  \frac{7}{8}\mu < \kappa < \frac{9}{8}\mu
 \end{equation}
 is satisfied for
 \begin{equation}\label{kappamu}
\kappa:=\sum_{K=1}^{N}\alpha_{K}\kappa_{K}^{[1]}\;\;\mbox{and}\;\; \mu:=\sum_{K=1}^{N}\alpha_{K}\mu_{K}^{[1]} 
\end{equation}
and if there are constants $\kappa(\alpha)$ and $\mu(\alpha)$, so that
\begin{equation}\label{kappamuungl}
\kappa\geq\kappa(\alpha)>0\;\; \mbox{and}\;\; \mu\leq\mu(\alpha),
\end{equation}
then there exist constants $\bar{C}_{0}$, 
$\bar{C}_{1}$ and $\bar{C}_{2}$, such that the stability estimate
\begin{eqnarray*}
 &&\bigg[\rho\|(\dot{u}-\dot{\bar{u}})(t,\cdot)\|_{L^{2}(\Omega,\RR^3)}^{2} + \kappa(\alpha)\|(Ju-J\bar{u})(t,\cdot)\|_{L^{2}(\Omega,\RR^{3\times 3})}^{2}\\[1ex]
 && + \rho\|(\ddot{u}-\ddot{\bar{u}})(t,\cdot)\|_{L^{2}(\Omega,\RR^3)}^{2} + \kappa(\alpha)\|(J\dot{u}-J\dot{\bar{u}})(t,\cdot)\|_{L^{2}(\Omega,\RR^{3\times 3})}^{2}\\[1ex]
 &&\;\; +\|(u-\bar{u})(t,\cdot)\|_{H^{2}(\Omega,\RR^3)}^{2}\bigg]^{\frac{1}{2}}\\
 &&\leq \bar{C}_{0}\bigg[\mu(\alpha)\|(u_{0}-\bar{u}_{0})(t,\cdot)\|_{H^{2}(\Omega,\RR^3)}^{2} + \|(u_{1}-\bar{u}_{1})(t,\cdot)\|_{H^{1}(\Omega,\RR^3)}^{2}\bigg]^{\frac{1}{2}}\\[1ex]
 && + \bar{C}_{1} \|(f-\bar{f})\|_{W^{1,1}((0,T),L^{2}(\Omega,\RR^3))} + \bar{C}_{2}\|\alpha-\bar{\alpha}\|_{\infty}
\end{eqnarray*}
is valid for all $t\in(0,T)$.
Thereby, the constants $\bar{C}_{0}$, $\bar{C}_{1}$ and $\bar{C}_{2}$ only depend on $T$, $M_{0}$, $M_{1}$, $M_{2}$, $M_{3}$,
\begin{equation}
 \bar{C}(\alpha) := \sum_{K=1}^{N}\alpha_{K}\mu_{K}^{[2]}\bigg(\sum_{K=1}^{N}\alpha_{K}\kappa_{K}^{[1]}\bigg)^{-1}
\end{equation}
and 
\begin{equation}
 \hat{C}(\alpha) := \frac{\hat{K}}{1-\sqrt{1-\epsilon}}\sum_{K=1}^{N}\alpha_{K}\mu_{K}^{[1]}\bigg(\sum_{K=1}^{N}\alpha_{K}\kappa_{K}^{[1]}\bigg)^{-2},
\end{equation}
where $0<\epsilon<1$ is a constant, whose existence is ensured by inequality (\ref{dim}). The constant $\hat{K}>0$ is defined by the continuity of the embedding $H_{0}^{2,1}(\Omega,\RR^3):=H^{2}(\Omega,\RR^3)\cap H_{0}^{1}(\Omega,\RR^3)\hookrightarrow H^{2}(\Omega,\RR^3)$,
\[\|g\|_{H^{2}(\Omega,\RR^3)}\leq \hat{K}\|g\|_{H_{0}^{2,1}(\Omega,\RR^3)}=\hat{K}\bigg(\sum_{k=1}^{3}\int\limits_{\Omega}\sum_{l=1}^{3}\sum_{j=1}^{3}(\partial_{i}\partial_{j}g_{k}(x))^{2}dx\bigg)^{\frac{1}{2}}\]
for all $g\in H_{0}^{2,1}(\Omega,\RR^3)$. Moreover, the constants $\bar{C}_{0}$, $\bar{C}_{1}$ and $\bar{C}_{2}$ are uniformly bounded, if we take $(M_{0},M_{1},M_{2},M_{3},\bar{C}(\alpha),\hat{C}(\alpha),T)\in\mathcal{M}$ with $\mathcal{M}\subset(0,\infty)^{7}$ bounded.\\[2mm]
\end{theorem}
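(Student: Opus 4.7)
Since the statement is recapitulated verbatim from \cite{WOESTEHOFF;SCHUSTER:15}, the natural route is the classical energy method for quasilinear hyperbolic systems, adapted to the conic-combination setting. Setting $w:=u-\bar u$ and subtracting the two instances of (\ref{cauchy-hyper-comb}), the fundamental theorem of calculus applied to $\nabla_{Y}C_{K}(x,\cdot)$ along the segment between $J\bar u$ and $Ju$ yields the linearised error equation
\[
\rho\,\ddot w - \sum_{K=1}^{N}\alpha_{K}\,\nabla\!\cdot\!\bigl(A_{K}(t,x)\,Jw\bigr) \;=\; (f-\bar f) + \sum_{K=1}^{N}(\alpha_{K}-\bar\alpha_{K})\,\nabla\!\cdot\!\nabla_{Y}C_{K}(x,J\bar u),
\]
where $A_{K}(t,x):=\int_{0}^{1}\nabla_{Y}\nabla_{Y}C_{K}(x,J\bar u + s\,Jw)\,ds$. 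The assumptions (\ref{bed1})--(\ref{beschr6}) and $u,\bar u\in\mathcal{A}$ guarantee that $A_{K}$ together with its spatial and time derivatives is $L^{\infty}$-bounded uniformly over bounded $\mathcal{M}$.

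\emph{Step 1: hyperbolic energy identities.} I would test the error equation with $\dot w$, integrate by parts against the homogeneous Dirichlet condition on $\partial\Omega$, and invoke the lower bound in (\ref{bed2}) — which survives the $s$-averaging defining $A_{K}$ — to control $\tfrac{d}{dt}\bigl(\int_{\Omega}\rho|\dot w|^{2}\,dx + \kappa(\alpha)\|Jw\|_{L^{2}}^{2}\bigr)$ by the data and by $\|\alpha-\bar\alpha\|_{\infty}$. Differentiating the error equation once in time and testing with $\ddot w$ produces the analogous bound on $\int_{\Omega}\rho|\ddot w|^{2}\,dx + \kappa(\alpha)\|J\dot w\|_{L^{2}}^{2}$; the commutator pieces generated by $\partial_{t}A_{K}$ are absorbed via Young's inequality, and a single Gronwall step, whose constants depend only on the parameters listed in the theorem, yields the four hyperbolic-energy terms on the left-hand side.

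\emph{Step 2: $H^{2}$-estimate by elliptic inversion at frozen $t$.} For the $\|w(t,\cdot)\|_{H^{2}}$ contribution, at each fixed $t$ I would read the error equation as the second-order elliptic system
\[
-\sum_{K=1}^{N}\alpha_{K}\,\partial_{j}\!\bigl(\partial_{Y_{ij}}\partial_{Y_{kl}}C_{K}(x,Ju)\,\partial_{l}w_{k}\bigr) \;=\; F_{i}(t,x),\qquad w(t,\cdot)|_{\partial\Omega}=0,
\]
with $F$ collecting $-\rho\ddot w_{i}$, the $\alpha-\bar\alpha$ source term and the $\partial_{x}A_{K}$-pieces already controlled by Step~1. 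The proximity condition (\ref{dim}) is precisely what lets one compare this anisotropic variable-coefficient elasticity operator with a reference scalar operator of the form $\tfrac{1}{2}(\kappa+\mu)\Delta$ acting componentwise, the relative operator norm of the perturbation being $\leq\sqrt{1-\epsilon}<1$. A Neumann-series / contraction argument then inverts the operator and, combined with the embedding $H_{0}^{2,1}\hookrightarrow H^{2}$ with constant $\hat K$, produces the factor $\hat K/(1-\sqrt{1-\epsilon})$ that appears in $\hat C(\alpha)$ and closes the bound on $\|w(t,\cdot)\|_{H^{2}}$.

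\emph{Obstacle and conclusion.} Combining Steps~1 and~2 and tracking constants gives the claimed inequality with $\bar C_{0},\bar C_{1},\bar C_{2}$ depending only on $T,M_{0},\dots,M_{3},\bar C(\alpha),\hat C(\alpha)$, from which their uniform boundedness on bounded $\mathcal{M}\subset(0,\infty)^{7}$ follows by inspection. The principal obstacle is Step~2: the hyperbolic energy method alone yields no $H^{2}$-control on $w$, so an elliptic regularity argument for a variable-coefficient, anisotropic elasticity system is unavoidable; the ratio condition (\ref{dim}) is exactly what limits how far the linearised coefficients may deviate from their ``averaged'' value and thus what makes the perturbation argument convergent, which in turn dictates the specific contraction rate $1-\sqrt{1-\epsilon}$ in $\hat C(\alpha)$.
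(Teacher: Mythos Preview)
The paper does not contain a proof of this theorem; it is stated without proof as a direct quotation of \cite[Theorem~2.1]{WOESTEHOFF;SCHUSTER:15} (``We start by recapitulating an existing uniqueness result\ldots''). So there is no in-paper argument against which to compare your plan.

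That said, your sketch is a plausible reconstruction of how such a stability estimate is typically obtained and is consistent with the structure of the constants appearing in the statement: the hyperbolic energy identities in Step~1 account for the $\rho\|\dot w\|^2$, $\kappa(\alpha)\|Jw\|^2$, $\rho\|\ddot w\|^2$, $\kappa(\alpha)\|J\dot w\|^2$ terms and explain why $\bar C(\alpha)$ (the ratio of the $\mu_K^{[2]}$- to $\kappa_K^{[1]}$-sums) enters through the Gronwall step, while your Step~2 correctly identifies the role of the narrow-band condition~(\ref{dim}) as enabling a Neumann-series inversion of the frozen-time elliptic operator, which is exactly what produces the factor $\hat K/(1-\sqrt{1-\epsilon})$ in $\hat C(\alpha)$. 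Since the actual proof lives in the cited reference, a full verification would require consulting \cite{WOESTEHOFF;SCHUSTER:15} directly.
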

The function $\bar{C}$ is positive and bounded in the following way because of the non-negativity of the coefficients $\alpha_{K}$: 
\begin{equation}\label{Cbar}
 0 < \zeta := \frac{\min_{1\leq K\leq N}\mu_{K}^{[2]}}{\max_{1\leq K\leq N}\kappa_{K}^{[1]}}\leq \bar{C}(\alpha) \leq \frac{\max_{1\leq K\leq N}\mu_{K}^{[2]}}{\min_{1\leq K\leq N}\kappa_{K}^{[1]}} =: \eta < \infty. 
\end{equation}
We define for the remainder of the article the spaces
\[ V:=H^{1}(\Omega,\RR^3)\;\; \mbox{ and }\;\; H:=L^{2}(\Omega,\RR^3)\]
and identify $H$ with its dual space $H'$. Then we get the Gelfand triple 
\[V \subset H=H' \subset V'\]
with dense, continuous embeddings. In addition, we consider 
\[U:= H_{0}^{1}(\Omega,\RR^3)\]
and thereby 
\[U \subset V \subset H=H' \subset V' \subset U'\]
with dense, continuous embeddings and $U'\simeq H^{-1} (\Omega,\RR^3)$.\\
We collect some further results which are useful when proving the main achievements of the article.\\
\begin{lemma}\label{L-ungl-h2}
For $u\in H^{2}(\Omega,\RR^3)$ we have
\begin{equation}\label{ungl-h2}
  \Bigg(\int\limits_{\Omega}\big(\sum_{i=1}^{3}\sum_{j=1}^{3}|\partial_{j}u_{i}|\big)^{4}dx\Bigg)^{\frac{1}{2}} \leq 27(1+C_{\Omega})C_{SE}^{2}\|u\|_{H^{2}(\Omega,\RR^3)}^{2}.
\end{equation}
for constants $C_{\Omega}>0$, $C_{SE}>0$ not depending on $u$.\\[1ex]
\end{lemma}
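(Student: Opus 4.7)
The plan is to reduce the inequality to a pointwise algebraic bound on the nine-term sum $\sum_{i,j=1}^3|\partial_j u_i|$, combined with the continuous Sobolev embedding $H^1(\Omega)\hookrightarrow L^4(\Omega)$ (valid in three space dimensions on a sufficiently regular $\Omega$) and a simple comparison of the $H^1$-norms of the first-order partials with the $H^2$-norm of $u$.

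First, I would apply the power-mean inequality (iterated Cauchy--Schwarz) to the nine summands to obtain the pointwise bound
\begin{equation*}
  \Big(\sum_{i,j=1}^3|\partial_j u_i(x)|\Big)^4 \leq 9^3\sum_{i,j=1}^3|\partial_j u_i(x)|^4 \quad\text{for a.e. } x\in\Omega.
\end{equation*}
Integrating over $\Omega$ and taking the square root produces the factor $27 = 9^{3/2}$:
\begin{equation*}
  \Big(\int_\Omega\big(\sum_{i,j=1}^3|\partial_j u_i|\big)^4\,dx\Big)^{1/2} \leq 27\Big(\sum_{i,j=1}^3\|\partial_j u_i\|_{L^4(\Omega)}^4\Big)^{1/2}.
\end{equation*}
Next, I would invoke the Sobolev embedding $H^1(\Omega)\hookrightarrow L^4(\Omega)$ with constant $C_{SE}$ to replace each $\|\partial_j u_i\|_{L^4(\Omega)}$ by $C_{SE}\|\partial_j u_i\|_{H^1(\Omega)}$; combined with the elementary inequality $\big(\sum_k a_k^4\big)^{1/2}\leq\sum_k a_k^2$ (valid for nonnegative $a_k$ since $\sum b_k^2 \leq (\sum b_k)^2$), this reduces the right-hand side to $27\,C_{SE}^2\sum_{i,j=1}^3\|\partial_j u_i\|_{H^1(\Omega)}^2$.

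To close the argument I would bound the remaining sum by $(1+C_\Omega)\|u\|_{H^2(\Omega,\RR^3)}^2$. Expanding $\|\partial_j u_i\|_{H^1(\Omega)}^2 = \|\partial_j u_i\|_{L^2(\Omega)}^2 + \sum_{k=1}^3\|\partial_k\partial_j u_i\|_{L^2(\Omega)}^2$, the second-order contribution is trivially controlled by $\|u\|_{H^2(\Omega,\RR^3)}^2$ after summing over $(i,j)$, while the first-order contribution is absorbed into a constant $C_\Omega\geq 0$ depending only on $\Omega$ (arising from a Poincaré- or interpolation-type comparison of the first- and second-derivative parts of the $H^2$-norm). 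Chaining the three estimates yields the claim with constant $27(1+C_\Omega)C_{SE}^2$. The main obstacle here is purely bookkeeping: keeping track of how the factors of $3$ coming from Cauchy--Schwarz on the nine-term sum, the Sobolev constant $C_{SE}$, and the domain-dependent constant $C_\Omega$ combine to give exactly the asserted prefactor; there is no deeper analytic issue beyond the three-dimensional embedding $H^1(\Omega)\hookrightarrow L^4(\Omega)$.
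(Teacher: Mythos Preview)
Your proposal is correct and follows essentially the same route as the paper: the pointwise bound $(\sum_{i,j}|\partial_j u_i|)^4\le 9^3\sum_{i,j}|\partial_j u_i|^4$ produces the factor $27$, the Sobolev embedding $H^1(\Omega)\hookrightarrow L^4(\Omega)$ supplies $C_{SE}^2$, and a Poincar\'e-type estimate on $\partial_j u_i$ yields the factor $(1+C_\Omega)$. The only cosmetic difference is ordering---the paper applies Poincar\'e before the $\ell^4$--$\ell^2$ comparison, you do it after---but the ingredients and constants match exactly.
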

\begin{proof}
Let $u\in H^{2}(\Omega,\RR^3)$. Using Poincar\'{e}'s inequality
\begin{equation}\label{poincare}   \|u\|_{L^2 (\Omega)} \leq C_{\Omega} \|\nabla u\|_{L^2 (\Omega)}\qquad \mbox{for } u\in H_0^1 (\Omega),  \end{equation}
Sobolev's embedding theorem for $\Omega\subset \RR^3$ as well as H\"older's inequality yields
\begin{eqnarray*}
 \int\limits_{\Omega}\big(\sum_{i=1}^{3}\sum_{j=1}^{3}|\partial_{j}u_{i}|\big)^{4}\,dx
 &\leq & 9^{3}\sum_{i=1}^{3}\sum_{j=1}^{3}\int\limits_{\Omega}|\partial_{j}u_{i}|^{4}\,dx
 = 9^{3}\sum_{i=1}^{3}\sum_{j=1}^{3}\|\partial_{j}u_{i}\|^{4}_{L^{4}(\Omega)}\\
 &\leq& 9^{3}C_{SE}^{4}\sum_{i=1}^{3}\sum_{j=1}^{3}\|\partial_{j}u_{i}\|^{4}_{H^{1}(\Omega)}\\
 &\leq& 9^{3}(1+C_{\Omega})^{2}C_{SE}^{4}\sum_{i=1}^{3}\sum_{j=1}^{3}\big(\sum_{|\alpha|=1}\|\partial_{\alpha}\partial_{j}u_{i}\|^{2}_{L^{2}(\Omega)}\big)^{2}\\ 
 &\leq& 9^{3}(1+C_{\Omega})^{2}C_{SE}^{4}\big(\sum_{i=1}^{3}\sum_{j=1}^{3}\sum_{|\alpha|=1}\|\partial_{\alpha}\partial_{j}u_{i}\|^{2}_{L^{2}(\Omega)}\big)^{2}\\ 
 &\leq& 9^{3}(1+C_{\Omega})^{2}C_{SE}^{4}\|u\|_{H^{2}(\Omega,\RR^3)}^{4},
\end{eqnarray*}
which proves assertion (\ref{ungl-h2}).\\[1ex]
\end{proof}
\\[2mm]
Furthermore, we need the following theorem, shown by Lions \cite{lions1971opt}, which is concerned with the solvability of a special class of linear initial value problems.\\
\begin{theorem}{(Lions)}\label{lions}
Let $\mathcal{A}(t)$, $t\in[0,T]$, be a family of operators, that map from $U$ onto $U'$ and let $a(t;v_{1},v_{2}):=\langle\mathcal{A}(t)v_{1},v_{2}\rangle_{U'\times U}$ define corresponding bilinear forms on $U$, satisfying\\
 \renewcommand{\labelenumi}{\roman{enumi})}
 \begin{enumerate}
  \item $a(t;v_{1},v_{2})$ is differentiable in $t$, $\forall v_{1},v_{2}\in U$, $t\in[0,T]$\\
  \item $a(t;v_{1},v_{2})=a(t;v_{2},v_{1})$ $\forall v_{1},v_{2}\in U$, $t\in[0,T]$\\
  \item There exist constants $\alpha>0$ and $\lambda\in\RR$ with 
      \[a(t;v,v) + \lambda\|v\|_{H}^{2}\geq\alpha\|v\|_{V}^{2}\;\;\forall v \in U,\;t\in[0,T].\]
 \end{enumerate}
 Then for every $f\in L^{2}(0,T;H)$, $v_{0}\in U$ and $v_{1}\in H$ there exists a unique $v\in L^{2}(0,T;U)$ with $v'\in L^{2}(0,T;H)$, that satisfies 
 \begin{eqnarray}\label{lions-eq}
  v'' + \mathcal{A}(t)v &=& f\;\;\mbox{ on }\;\; (0,T), \nonumber \\
  v(0) &=& v_{0}, \\
  v'(0) &=& v_{1}. \nonumber 
 \end{eqnarray}
 Furthermore, the mapping 
 \[\{f,v_{0},v_{1}\}\mapsto \{v,v'\}\]
 is continuous as a map from $L^{2}(0,T;H)\times U\times H$ onto $L^{2}(0,T;U)\times L^{2}(0,T;H)$.\\
\end{theorem}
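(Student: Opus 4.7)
The plan is to establish existence by the Faedo--Galerkin approximation method and to obtain uniqueness together with the continuous dependence from a single energy estimate. First I would pick a countable basis $\{w_k\}_{k\in\mathbb{N}}\subset U$ that is dense in $U$ (for example the eigenfunctions of the Dirichlet Laplacian on $\Omega$, normalised so as to be orthonormal in $H$), approximate the data by sequences $v_{0n}\to v_0$ in $U$ and $v_{1n}\to v_1$ in $H$, and look for approximate solutions
\[
v_n(t)=\sum_{k=1}^{n} g_{nk}(t)\,w_k
\]
satisfying, for $j=1,\dots,n$,
\[
(v_n''(t),w_j)_H + a(t;v_n(t),w_j) = (f(t),w_j)_H,
\]
with the initial conditions induced by $v_{0n}$ and $v_{1n}$. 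This is a linear second-order ODE system for the coefficients $g_{nk}(t)$ whose coefficient matrices are differentiable in $t$ by hypothesis (i), so standard ODE theory yields a unique $v_n$ on the full interval $[0,T]$.

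The core of the argument is then an a priori energy estimate uniform in $n$. Testing the Galerkin equation against $v_n'(t)$ (i.e.\ multiplying by $g_{nj}'(t)$ and summing over $j$) produces
\[
\frac{1}{2}\frac{d}{dt}\|v_n'(t)\|_H^2 + a(t;v_n(t),v_n'(t)) = (f(t),v_n'(t))_H.
\]
The symmetry hypothesis (ii) combined with the differentiability (i) is essential here: it allows me to replace $a(t;v_n,v_n')$ by $\tfrac12\tfrac{d}{dt}a(t;v_n,v_n)-\tfrac12\dot{a}(t;v_n,v_n)$, where $\dot{a}$ denotes the form obtained by differentiating the coefficients of $a$ in $t$. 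Integrating over $[0,t]$, applying the coercivity assumption (iii), Cauchy--Schwarz on the right-hand side, and Gronwall's lemma then yields bounds for $\|v_n\|_{L^\infty(0,T;U)}$ and $\|v_n'\|_{L^\infty(0,T;H)}$ controlled by $\|v_{0n}\|_U$, $\|v_{1n}\|_H$, and $\|f\|_{L^2(0,T;H)}$.

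With these uniform bounds, Banach--Alaoglu furnishes a subsequence such that $v_n\rightharpoonup v$ weakly-$*$ in $L^\infty(0,T;U)$ and $v_n'\rightharpoonup v'$ weakly-$*$ in $L^\infty(0,T;H)$; in particular the stated regularity $v\in L^2(0,T;U)$, $v'\in L^2(0,T;H)$ follows. Passing to the limit in the weak form of the Galerkin equation gives $v''+\mathcal{A}(t)v=f$ in $L^2(0,T;U')$, and the initial conditions are recovered via the continuity of $v$ as an $H$-valued function (a consequence of $v\in L^2(0,T;U)$ and $v'\in L^2(0,T;H)$) together with an integration by parts against test functions supported near $t=0$. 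Uniqueness, and hence the claimed continuity of the map $\{f,v_0,v_1\}\mapsto\{v,v'\}$, both follow by applying the same energy estimate to the difference of two solutions with vanishing data.

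The step I expect to be most delicate is the manipulation of $\frac{d}{dt}a(t;v_n,v_n)$ and its passage to the weak limit. At the finite-dimensional Galerkin level $v_n'$ is trivially admissible as a test function, but transferring the identity to the limit $v$ requires either a density argument or a regularisation in time, both of which rely essentially on (i) and (ii). Without symmetry, the term $a(t;v,v')$ cannot be rewritten as a total time derivative plus a lower-order piece, and the bootstrap from the $H$-estimate on $v'$ to the $U$-estimate on $v$ breaks down. The lower-order perturbation $\lambda\|v\|_H^2$ in (iii) is by contrast harmless and is absorbed by Gronwall, at the cost only of an exponential factor in $T$ entering the continuity constants.
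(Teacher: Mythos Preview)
The paper does not prove this theorem at all: it is quoted verbatim as a known result from Lions \cite{lions1971opt} and used as a black box in the proof of Theorem~\ref{gateaux-ex}. Your Faedo--Galerkin argument is the standard proof (and is essentially the one Lions gives in the cited reference), so there is nothing to compare; your outline is correct and would serve as a self-contained substitute for the citation.
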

\begin{remark}
Lions and Magenes even show in \cite{lions1972non}, that the solution $v$ is in $\mathcal{C}([0,T],U)$ and $v'\in \mathcal{C}([0,T],H)$.\\[2mm]
\end{remark}
To evaluate normal derivatives of $v$ (see (\ref{lions-eq})) it is necessary to have more regularity of $v$ in $x$. This can be obtained by requiring more regularity of $f$ as well as of the initial values $v_{0}$ and $v_{1}$. The next lemma ensues directly from an application of Theorem 30.4 in \cite{wloka} (cf. \cite{BSS15}).\\
\begin{lemma}\label{wloka}
 Suppose that $f\in H^{1}(0,T;H)$, $v_{0}\in H_{0}^{2}(\Omega,\RR^3)\subset U$ and  $v_{1}\in U$. Then the unique solution $v$ of (\ref{lions-eq}) satisfies
 \[v\in\Big(H^{1}(0,T;U)\cap H^{2}(0,T;H)\Big)\]
 as well as  
 \[v\in L^{2}(0,T;H_{0}^{2}(\Omega,\RR^3)).\]
\end{lemma}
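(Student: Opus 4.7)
The plan is to apply Wloka's Theorem 30.4 from \cite{wloka} essentially directly, supplementing it with a pointwise-in-$t$ elliptic regularity argument to obtain the spatial improvement. The strategy has two parts: first, upgrade the temporal regularity of $v$ by differentiating the abstract evolution equation in time and reapplying Lions' theorem (Theorem \ref{lions}) to the derived problem; second, exploit the ellipticity of $\mathcal{A}(t)$ together with the improved temporal regularity to conclude $v(t)\in H_{0}^{2}(\Omega,\RR^{3})$ for almost every $t\in(0,T)$.

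For the temporal part, I would formally set $w:=v'$ and differentiate (\ref{lions-eq}) once in $t$. Provided the family $\{\mathcal{A}(t)\}$ depends smoothly enough on $t$ (a standing assumption in the setting of the paper, since $\mathcal{A}(t)$ eventually arises from the linearization of the hyperelastic operator along a fixed solution $u\in\mathcal{A}$), this produces a linear second-order evolution equation for $w$ whose right-hand side lies in $L^{2}(0,T;H)$, using $f'\in L^{2}(0,T;H)$ together with the regularity of $v$ already furnished by Theorem \ref{lions}. The initial data for $w$ read $w(0)=v_{1}\in U$ and $w'(0)=f(0)-\mathcal{A}(0)v_{0}\in H$; the second inclusion uses $v_{0}\in H_{0}^{2}(\Omega,\RR^{3})$ to guarantee $\mathcal{A}(0)v_{0}\in H$ and the embedding $H^{1}(0,T;H)\hookrightarrow C([0,T];H)$ to make sense of the pointwise trace of $f$. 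Applying Theorem \ref{lions} to this $w$-problem yields $w\in L^{2}(0,T;U)$ with $w'\in L^{2}(0,T;H)$, which is precisely $v\in H^{1}(0,T;U)\cap H^{2}(0,T;H)$.

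For the spatial statement I would read (\ref{lions-eq}) pointwise in $t$ as the elliptic equation $\mathcal{A}(t)v(t)=f(t)-v''(t)$. By the previous step the right-hand side belongs to $L^{2}(0,T;H)$, and for almost every $t$ the operator $\mathcal{A}(t)$ is a second-order uniformly elliptic operator with homogeneous Dirichlet data on the smooth domain $\Omega$. Standard elliptic regularity for second-order systems then gives $v(t)\in H_{0}^{2}(\Omega,\RR^{3})$ with a norm bound by $\|f(t)-v''(t)\|_{H}$; squaring and integrating in $t$ yields $v\in L^{2}(0,T;H_{0}^{2}(\Omega,\RR^{3}))$.

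The main obstacle, and the reason the additional hypotheses $f\in H^{1}(0,T;H)$ and $v_{0}\in H_{0}^{2}(\Omega,\RR^{3})$ are imposed, is the compatibility requirement for the differentiated problem: one must verify that $w'(0)=f(0)-\mathcal{A}(0)v_{0}$ lies in $H$ rather than merely in $U'$, so that Lions' theorem is applicable to the equation for $w$. Once this compatibility is secured, everything else reduces to a direct transcription of the bookkeeping in Wloka's Theorem 30.4, which is precisely why the lemma is said to ``ensue directly'' from that result.
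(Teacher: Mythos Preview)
Your proposal is correct and aligns with the paper's treatment: the paper does not give an independent proof but simply states that the lemma ``ensues directly from an application of Theorem 30.4 in \cite{wloka},'' and your sketch is precisely the standard mechanism behind that theorem (time-differentiation plus reapplication of Lions' existence result, followed by pointwise elliptic regularity). Your identification of the compatibility condition $f(0)-\mathcal{A}(0)v_{0}\in H$ as the reason for the hypotheses $f\in H^{1}(0,T;H)$ and $v_{0}\in H_{0}^{2}(\Omega,\RR^{3})$ is exactly the point.
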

One principal technique to prove the results of this article takes advantage of Gronwall's lemma.\\
\begin{lemma}[Gronwall's lemma]
 Let $\psi\in\mathcal{C}(0,T)$ and $b,k\in L^{1}(0,T)$ be non-negative functions. If $\psi$ satisfies
 \begin{equation*}
  \psi(\tau) \leq a + \int\limits_{0}^{\tau}b(t)\psi(t)dt + \int\limits_{0}^{\tau}k(t)\psi(t)^{p}dt
 \end{equation*}
 for all $\tau\in[0,T]$ with constants $p\in(0,1)$ and $a\geq 0$, then
 \begin{equation}\label{gronwall}
  \psi(\tau) \leq \exp\Big(\int\limits_{0}^{\tau}b(t)dt\Big)\bigg[a^{1-p}+(1-p)\int\limits_{0}^{\tau}k(t)\exp((p-1)\int\limits_{0}^{t}b(\sigma)d\sigma)dt\bigg]^{1/(1-p)}
 \end{equation}
 is valid for all $\tau\in[0,T]$.\\
\end{lemma}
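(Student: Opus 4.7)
The plan is to replace $\psi$ by a running majorant, kill the linear term with the usual multiplicative integrating factor, and then reduce the remaining nonlinear integral inequality to a separable ODE inequality via the Bihari substitution $W \mapsto W^{1-p}$.

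First I would introduce
\[
\Phi(\tau) := a + \int_0^{\tau} b(t)\psi(t)\,dt + \int_0^{\tau} k(t)\psi(t)^{p}\,dt,
\]
so the hypothesis reads $\psi \leq \Phi$. Since $b,k \in L^{1}(0,T)$ and $\psi$ is continuous (hence bounded) on $[0,T]$, $\Phi$ is non-negative, non-decreasing and absolutely continuous with $\Phi(0)=a$. Using $\psi \leq \Phi$ together with the monotonicity of $s\mapsto s^{p}$ on $[0,\infty)$ gives
\[
\Phi'(\tau) \;=\; b(\tau)\psi(\tau) + k(\tau)\psi(\tau)^{p} \;\leq\; b(\tau)\Phi(\tau) + k(\tau)\Phi(\tau)^{p}
\]
for a.e. $\tau\in[0,T]$.

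Next I would set $E(\tau):=\exp\bigl(-\int_{0}^{\tau}b(\sigma)\,d\sigma\bigr)$ and $W:=E\Phi$. A direct computation, $(E\Phi)' = -bE\Phi + E\Phi'$, combined with the differential inequality above yields
\[
W'(\tau) \;\leq\; k(\tau)\,W(\tau)^{p}\,\exp\Bigl((p-1)\int_{0}^{\tau} b(\sigma)\,d\sigma\Bigr)\quad\text{a.e.}
\]
Dividing by $W^{p}$ and using $(W^{1-p})' = (1-p)W^{-p}W'$, then integrating on $[0,\tau]$ and invoking $W(0)=a$, gives
\[
W(\tau)^{1-p} \;\leq\; a^{1-p} + (1-p)\int_{0}^{\tau} k(t)\,\exp\Bigl((p-1)\int_{0}^{t} b(\sigma)\,d\sigma\Bigr)\,dt .
\]
Finally, substituting back $\Phi(\tau) = W(\tau)\exp\bigl(\int_{0}^{\tau}b\bigr)$ and using $\psi \leq \Phi$ produces exactly the bound~(\ref{gronwall}).

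The main obstacle I anticipate is that the division by $W^{p}$ is only legitimate where $W>0$, whereas $W(0)=a$ may vanish. I would handle this by the standard perturbation device: replace $a$ by $a+\varepsilon$ for $\varepsilon>0$, work with the strictly positive majorant $\Phi_{\varepsilon}:=\varepsilon + \int_{0}^{\cdot}b\psi + \int_{0}^{\cdot}k\psi^{p}$, carry through the argument to obtain the estimate with $a$ replaced by $a+\varepsilon$, and then pass to the limit $\varepsilon\downarrow 0$ using continuity of the right-hand side of~(\ref{gronwall}) in $a$ (and dominated convergence in the $k$-integral, which is controlled by $k(t)\in L^{1}(0,T)$). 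A secondary technicality is the pointwise a.e. differentiation of $\Phi$, but this is immediate from its absolute continuity on $[0,T]$.
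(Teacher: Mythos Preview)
Your argument is correct and is in fact the standard Bihari-type proof of this inequality. Note, however, that the paper does not supply its own proof of this lemma at all: it simply cites \cite{Bainov199205} and then specializes the conclusion to constant $a,b,k$ and $p=\tfrac12$ to obtain the explicit form~(\ref{gronwall-const}) used later. So there is no ``paper's proof'' to compare against; your sketch would serve as a self-contained replacement for that citation. The perturbation $a\to a+\varepsilon$ to justify the division by $W^{p}$ when $a=0$ is exactly the right fix, and the absolute-continuity remark covers the a.e.\ differentiation cleanly.
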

A proof of this version can be found in \cite{Bainov199205}. In this article, Gronwall's lemma is applied for $a$, $b$, and $k$ being constants and $p=\frac{1}{2}$. Then we get as a consequence
of (\ref{gronwall}) the estimate
\begin{equation}\label{gronwall-const}
 \psi(\tau) \leq \bigg[\exp\big(\frac{1}{2}b\tau\big)a^{\frac{1}{2}} + \frac{k}{b}\Big(\exp\big(\frac{1}{2}b\tau\big)-1\Big)\bigg]^{2}
\end{equation}
for all $\tau\in[0,T]$.\\


\section{Fr\'{e}chet derivative of the forward operator}\label{secFrechet}


Let the domain $\Drm (\mathcal{T})\subset \RN$ of $\mathcal{T}$ be defined by 
\begin{equation}\label{domain-T}
\Drm (\mathcal{T}) := \big\{ \alpha \in \RN :  \mbox{the IBVP (\ref{cauchy-hyper-comb}), (\ref{anfangswert1})--(\ref{randwert}) has a unique solution } u\in \mathcal{A} \big\}.
\end{equation}
As a first step we derive a characterization of the G\^{a}teaux derivative of $\mathcal{T}$.\\
\begin{lemma}\label{frechet-system}
Let $\alpha\in \mathrm{int} \big(\Drm(\mathcal{T})\big)$ be an interior point of $\Drm(\mathcal{T})$. The G\^{a}teaux derivative $v=\mathcal{T}'(\alpha)h$ of the solution operator $\mathcal{T}$ fulfills for $h\in \RN$ the following linear system of differential equations with homogeneous initial and boundary value conditions
\begin{equation}\label{gateaux-diff}
  \rho\ddot{v}(t,x)-\nabla\cdot [\nabla_{Y}\nabla_{Y}C_{\alpha}(x,Ju(t,x)):Jv(t,x)] = \nabla \cdot [\nabla_{Y}C_{h}(x,Ju(t,x))]
\end{equation}
for $t\in[0,T]$ and $x\in\Omega\subset\RR^{3}$,
\begin{equation}\label{vAnfangswert}
  v(0,x)=\dot{v}(0,x)=0 \qquad \mbox{for } x\in\Omega
\end{equation}
and 
\begin{equation}\label{vRandwert}
 v(t,x)=0\qquad \mbox{for } x\in\partial\Omega.
\end{equation}
Here, we used the notations
\[C_{\alpha}=\sum_{K=1}^{N}\alpha_{K}C_{K}\;\;\mbox{ respectively }\;\;C_{h}=\sum_{K=1}^{N}h_{K}C_{K}.\]
\end{lemma}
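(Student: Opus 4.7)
\noindent\emph{Proof plan.} Since $\alpha\in\mathrm{int}(\Drm(\mathcal{T}))$, there exists $s_{0}>0$ such that $\alpha+sh\in\Drm(\mathcal{T})$ for every $|s|<s_{0}$. Setting $u:=\mathcal{T}(\alpha)$ and $u^{s}:=\mathcal{T}(\alpha+sh)$, both functions lie in $\mathcal{A}$ and satisfy (\ref{cauchy-hyper-comb})--(\ref{randwert}) with their respective coefficient vectors. The strategy is two-fold: first I would establish, via Lions' theorem, the existence and uniqueness of the candidate $v$ solving (\ref{gateaux-diff})--(\ref{vRandwert}); then I would show that the difference quotient $v^{s}:=s^{-1}(u^{s}-u)$ converges to this $v$ as $s\to 0$, which simultaneously identifies $v$ as the G\^{a}teaux derivative.

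To apply Theorem \ref{lions}, I would work with the bilinear form
\[
a(t;v_{1},v_{2}) = \int_{\Omega}\langle\langle Jv_{2}\,|\,\nabla_{Y}\nabla_{Y}C_{\alpha}(x,Ju(t,x)):Jv_{1}\rangle\rangle\,dx
\]
on $U=H_{0}^{1}(\Omega,\RR^{3})$. Symmetry is immediate from the Hessian structure of $\nabla_{Y}\nabla_{Y}C_{K}$; differentiability in $t$ follows from $u\in\mathcal{A}$ combined with the bounds (\ref{beschr5})--(\ref{beschr6}); and coercivity is a direct consequence of (\ref{bed2}), positivity of the $\alpha_{K}$, and Poincar\'{e}'s inequality (\ref{poincare}), which together give $a(t;v,v)\geq \kappa(1+C_{\Omega}^{2})^{-1}\|v\|_{V}^{2}$ with $\kappa$ as in (\ref{kappamu}). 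The right-hand side $\nabla\cdot[\nabla_{Y}C_{h}(x,Ju(t,\cdot))]$ belongs to $L^{2}(0,T;H)$ by (\ref{beschr3}), (\ref{beschr5}) and $u\in\mathcal{A}$, so Lions' theorem produces a unique $v\in L^{2}(0,T;U)$ with $\dot v\in L^{2}(0,T;H)$.

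Next, subtracting the hyperelastic wave equations for $u^{s}$ and $u$, dividing by $s$, and expanding the increment $\nabla_{Y}C_{K}(x,Ju+sJv^{s})-\nabla_{Y}C_{K}(x,Ju)$ by a first-order Taylor formula yields
\begin{equation*}
\rho\ddot{v}^{s} - \nabla\cdot\bigl[\nabla_{Y}\nabla_{Y}C_{\alpha}(x,Ju):Jv^{s}\bigr] \;=\; \nabla\cdot[\nabla_{Y}C_{h}(x,Ju^{s})] + r^{s},
\end{equation*}
with zero initial and boundary data, where the remainder $r^{s}$ is a divergence of a term quadratic in $Jv^{s}$ weighted by third derivatives of $C_{K}$, bounded through (\ref{beschr1}). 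Applying Theorem \ref{theorem21imp} to the pair $(u,u^{s})$ (identical initial data and $f$, coefficient vectors differing by $sh$) gives $\|v^{s}\|_{L^{\infty}(0,T;H^{1})}\leq C\|h\|_{\infty}$ uniformly in $s$, and invoking Lemma \ref{L-ungl-h2} together with (\ref{beschr1}) and (\ref{beschr5})--(\ref{beschr6}) then leads to $\|r^{s}\|_{L^{2}(0,T;H)}=O(s)$.

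Finally, the difference $w^{s}:=v^{s}-v$ satisfies the same linear IBVP as $v$, driven instead by $\nabla\cdot[\nabla_{Y}C_{h}(x,Ju^{s})-\nabla_{Y}C_{h}(x,Ju)]+r^{s}$. Since $u^{s}\to u$ (again by Theorem \ref{theorem21imp}) and $\|r^{s}\|=O(s)$, the driving term converges to zero in $L^{2}(0,T;H)$; the continuity assertion at the end of Lions' theorem then yields $w^{s}\to 0$ in $L^{2}(0,T;U)$, whence $v^{s}\to v$ and $v=\mathcal{T}'(\alpha)h$. The homogeneous initial and boundary conditions for $v$ are inherited directly from those of the $v^{s}$. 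The main technical obstacle I anticipate is the $O(s)$ control of $r^{s}$: the divergence acting on the quadratic-in-$Jv^{s}$ Taylor remainder produces terms involving $\partial_{l}\partial_{j}v^{s}_{k}$ and products of first derivatives of $u$ and $v^{s}$, all of which must be tamed by simultaneously exploiting the admissibility bounds encoded in $\mathcal{A}$, the stability estimate of Theorem \ref{theorem21imp}, and the Sobolev-type inequality of Lemma \ref{L-ungl-h2}.
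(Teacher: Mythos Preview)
Your approach differs substantially from the paper's. The paper treats Lemma~\ref{frechet-system} as a purely formal calculation: it \emph{assumes} that the limit $v=\lim_{s\to 0^+}s^{-1}(u(\alpha+sh)-u(\alpha))$ exists, rewrites the increment $\nabla_Y C(x,Ju(\alpha+sh))-\nabla_Y C(x,Ju(\alpha))$ via the integral mean-value formula $F(1)-F(0)=\int_0^1 F'(r)\,dr$, passes to the limit using continuity of $\alpha\mapsto u(\alpha)$ from Theorem~\ref{theorem21imp}, and reads off (\ref{gateaux-diff}); existence of a solution to that IBVP and actual convergence of the difference quotient are deferred to Theorem~\ref{gateaux-ex} and to the Fr\'echet estimate of Theorem~\ref{thm-glm-konv}. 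Your plan is more self-contained: you first construct the candidate $v$ via Lions' theorem and then prove $v^s\to v$ directly, thereby establishing G\^ateaux differentiability within the lemma. This is legitimate and in some ways cleaner, since it does not postulate a hypothetical limit; effectively you are folding parts of Theorems~\ref{gateaux-ex} and~\ref{thm-glm-konv} into the lemma.

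One quantitative point deserves care: the claim $\|r^s\|_{L^2(0,T;H)}=O(s)$ is optimistic with the tools you cite. Taking the divergence of the quadratic Taylor remainder produces a cross-term of the form $(\nabla_Y^3 C_\alpha)\,(\nabla^2\tilde u)\,(J\tilde u)/s$ in $L^2(\Omega)$. Theorem~\ref{theorem21imp} gives $\|\tilde u\|_{H^2}=O(s)$ and hence, via Lemma~\ref{L-ungl-h2}, $\|J\tilde u\|_{L^4}=O(s)$; membership in $\mathcal{A}$ gives $\|\nabla^2\tilde u\|_{L^\infty}=O(1)$, so interpolation yields $\|\nabla^2\tilde u\|_{L^4}=O(s^{1/2})$. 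An $L^4$--$L^4$ H\"older split then gives only $O(s^{1/2})$ for this term, not $O(s)$. This still suffices for $r^s\to 0$ and hence for the G\^ateaux conclusion, but not for the stated rate. The paper avoids this obstruction in Theorem~\ref{thm-glm-konv} by using an energy argument (testing with $2\dot d$ and integrating by parts in $t$) rather than estimating the divergence-form remainder in $L^2(0,T;H)$; that route recovers the sharper exponent $3/2$ and is worth keeping in mind if you later want Fr\'echet differentiability.
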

\begin{proof}
Let $\alpha,\; h\in\RN$ and $s>0$ sufficiently small. Then $u(\alpha+sh) := \mathcal{T}(\alpha+sh)$ denotes the solution of (\ref{cauchy-hyper-comb}), (\ref{anfangswert1})--(\ref{randwert}) with $\alpha$ replaced by $\alpha+sh$. We set further $u(\alpha) := \mathcal{T} (\alpha)$. Then we have
\begin{equation}\label{diff-1}
 \rho\big(\ddot{u}(\alpha+sh)-\ddot{u}(\alpha)\big)-\nabla\cdot [\nabla_{Y}C(x,Ju(\alpha+sh))-\nabla_{Y}C(x,Ju(\alpha))] = 0.
\end{equation}
We assume for the moment, that the following limit 
\[v:=\lim_{s\to 0^{+}}\frac{u(\alpha+sh)-u(\alpha)}{s} \]
exists. Then $v=\mathcal{T}'(\alpha)h$ is the G\^{a}teaux derivative at $\alpha$ in the direction $h$. Please note that the limit process is well-defined since we assumed $\alpha\in\Drm (\mathcal{T})$ to be non-isolated. The aim is to determine a partial differential equation which is uniquely solved by $v$. For this purpose it is necessary to restate equation (\ref{diff-1}) as
\begin{eqnarray*}
 &&\nabla_{Y}C(x,Ju(\alpha+sh))-\nabla_{Y}C(x,Ju(\alpha))\\
 &=& \sum_{j=1}^{N}(\alpha_{j}+sh_{j})\nabla_{Y}C_{j}(x,Ju(\alpha+sh))-\sum_{j=1}^{N}\alpha_{j}\nabla_{Y}C_{j}(x,Ju(\alpha))\\
 &=& \sum_{j=1}^{N}(\alpha_{j}+sh_{j})[\nabla_{Y}C_{j}(x,Ju(\alpha+sh))-\nabla_{Y}C_{j}(x,Ju(\alpha))] + s\nabla_{Y}C_{h}(x,Ju(\alpha)).
\end{eqnarray*}
On the other hand setting
\[F(r):=\nabla_{Y}C_{j}(x,rJu(\alpha+sh)+(1-r)Ju(\alpha))\]
for $r\in[0,1]$ yields
\begin{eqnarray*}
 &&\nabla_{Y}C_{j}(x,Ju(\alpha+sh))-\nabla_{Y}C_{j}(x,Ju(\alpha))
 = \int\limits_{0}^{1}F'(r)dr\\
 &=& \int\limits_{0}^{1}\langle\langle\underbrace{\nabla_{Y}\nabla_{Y}C_{j}(x,rJu(\alpha+sh)+(1-r)Ju(\alpha))}_{:=Y_{r}},Ju(\alpha+sh)-Ju(\alpha)\rangle\rangle ds.\\
\end{eqnarray*}
Using the fact that
\[   \lim_{r\to 0^+} Y_{r} = \nabla_{Y}\nabla_{Y}C_{j}(x,Ju(\alpha)) , \]
which follows from the continuity of $\alpha \mapsto u(\alpha)$ as stated in Theorem \ref{theorem21imp}, we get
\begin{eqnarray*}
 &&\lim_{s\to 0^+} \frac{1}{s} \Big( \nabla_{Y}C(x,Ju(\alpha+sh))-\nabla_{Y}C(x,Ju(\alpha)) \Big) \\
 &=&\lim_{s\to 0^+}\sum_{j=1}^{N}(\alpha_{j}+sh_{j})\int\limits_{0}^{1}\langle\langle Y_{r},\frac{1}{s}(Ju(\alpha+sh)-Ju(\alpha))\rangle\rangle ds + \nabla_{Y}C_{h}(x,Ju(\alpha))\\
 &=& \sum_{j=1}^{N}\alpha_{j}\int\limits_{0}^{1}\langle\langle\nabla_{Y}\nabla_{Y}C_{j}(x,Ju(\alpha)),Jv\rangle\rangle ds + \nabla_{Y}C_{h}(x,Ju(\alpha))\\
 &=&\nabla_{Y}\nabla_{Y}C_{\alpha}(x,Ju(\alpha)):Jv + \nabla_{Y}C_{h}(x,Ju(\alpha)).
\end{eqnarray*}
If we insert this in equation (\ref{diff-1}) and use the definition of $v$, we obtain the differential equation (\ref{gateaux-diff}) for $v$. The boundary condition for $v$ follows directly from the definition of $v$ and (\ref{randwert}). 
The initial conditions for $v$ are finally obtained from the assumptions $u_{0}(\alpha+sh)=u_{0}(\alpha)=u_0$ and $u_{1}(\alpha+sh)=u_1 (\alpha)=u_1$.\\
\end{proof}
\\[2mm]
It remains to prove that the IBVP (\ref{gateaux-diff})--(\ref{vRandwert}) has a unique solution and hence the G\^{a}teaux derivative is well defined.\\
\begin{theorem}\label{gateaux-ex}
 The IBVP (\ref{gateaux-diff})--(\ref{vRandwert}) has a unique, weak solution $v=\mathcal{T}'(\alpha)h$ in $L^{2}(0,T;V)$.\\
\end{theorem}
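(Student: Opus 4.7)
The plan is to recast the IBVP (\ref{gateaux-diff})--(\ref{vRandwert}) into the abstract framework of Lions' theorem (Theorem \ref{lions}) with $U = H_0^1(\Omega,\RR^3)$, $V = H^1(\Omega,\RR^3)$, and $H = L^2(\Omega,\RR^3)$ equipped with the $\rho$-weighted inner product $\langle w_1, w_2 \rangle_H := \int_\Omega \rho(x)\, w_1 \cdot w_2\, dx$, which is equivalent to the standard $L^2$ inner product by (\ref{massendichte}). Multiplying (\ref{gateaux-diff}) by a test function $\phi \in U$ and integrating by parts (using (\ref{vRandwert})) suggests introducing the family of operators $\mathcal{A}(t):U \to U'$ via the bilinear form
\[
a(t;v_1,v_2) := \int_\Omega \langle\langle Jv_2,\, \nabla_Y\nabla_Y C_\alpha(x,Ju(t,x)):Jv_1 \rangle\rangle\, dx,
\]
with right-hand side $F(t,x) := \nabla \cdot [\nabla_Y C_h(x,Ju(t,x))]$. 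With the zero initial data (\ref{vAnfangswert}) already in $U$ and $H$ respectively, it then suffices to verify Lions' three hypotheses and that $F \in L^2(0,T;H)$.

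First I would verify the abstract hypotheses. Symmetry (ii) follows from $\partial_{Y_{ij}}\partial_{Y_{kl}} C_\alpha = \partial_{Y_{kl}}\partial_{Y_{ij}} C_\alpha$ (Schwarz, since each $C_K$ is three times continuously differentiable in $Y$). For differentiability (i), the chain rule gives
\[
\tfrac{d}{dt}\bigl[\nabla_Y\nabla_Y C_\alpha(x,Ju(t,x))\bigr]_{ij,kl} = \sum_{p,q} \partial_{Y_{pq}}\partial_{Y_{ij}}\partial_{Y_{kl}} C_\alpha(x,Ju(t,x))\,\partial_p \dot u_q(t,x),
\]
which is in $L^\infty((0,T)\times\Omega)$ by (\ref{beschr1}) and the $M_1$-bound in the definition (\ref{bedA}) of $\mathcal{A}$; hence $t \mapsto a(t;v_1,v_2)$ is differentiable for each fixed $v_1,v_2 \in U$. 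For coercivity (iii), condition (\ref{bed2}) yields $a(t;v,v) \geq \kappa\,\|Jv\|^2_{L^2(\Omega,\RR^{3\times 3})}$ with $\kappa$ as in (\ref{kappamu}), and Poincar\'e's inequality (\ref{poincare}) applied to $v \in U$ produces $\|v\|_V^2 \leq (1+C_\Omega^2)\,\|Jv\|^2_{L^2}$, so
\[
a(t;v,v) \geq \frac{\kappa}{1+C_\Omega^2}\,\|v\|_V^2 \qquad \text{for all } v \in U,\ t \in [0,T],
\]
i.e.\ (iii) holds with $\lambda = 0$.

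The main technical step, and the place where the regularity assumptions on $u \in \mathcal{A}$ are really exploited, is verifying $F \in L^2(0,T;H)$. Expanding the divergence via the chain rule gives
\[
F_k(t,x) = \bigl(\partial_l \partial_{Y_{kl}} C_h\bigr)(x,Ju(t,x)) + \bigl(\partial_{Y_{ij}}\partial_{Y_{kl}} C_h\bigr)(x,Ju(t,x))\,\partial_l \partial_j u_i(t,x).
\]
The first summand is bounded in $L^\infty((0,T)\times\Omega)$ by (\ref{beschr3}) applied termwise to $C_h = \sum_K h_K C_K$. In the second summand, the coefficient is bounded in $L^\infty$ because of the upper bound in (\ref{bed2}) (which controls the operator norm of $\nabla_Y\nabla_Y C_K$), and $\partial_l\partial_j u_i \in L^\infty(0,T;L^2(\Omega))$ by the $M_0$-condition in (\ref{bedA}); hence the product lies in $L^\infty(0,T;H) \subset L^2(0,T;H)$. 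Dividing by $\rho$, which is bounded above and below by (\ref{massendichte}), preserves this regularity.

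Having checked all hypotheses, Theorem \ref{lions} produces a unique $v \in L^2(0,T;U)$ with $\dot v \in L^2(0,T;H)$ solving the weak form of (\ref{gateaux-diff})--(\ref{vRandwert}); since $U \hookrightarrow V$ continuously, $v \in L^2(0,T;V)$, which is the desired conclusion. I expect the main obstacle to be the bookkeeping in verifying that the right-hand side $F$ lies in $L^2(0,T;H)$ via the chain rule, since this is where the interplay between the differentiability assumptions (\ref{beschr1})--(\ref{beschr6}) on the dictionary and the regularity built into the admissible class $\mathcal{A}$ becomes essential; once this is established, the abstract machinery of Lions runs straightforwardly.
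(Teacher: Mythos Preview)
Your proposal is correct and follows essentially the same route as the paper: recast (\ref{gateaux-diff})--(\ref{vRandwert}) in the abstract form of Lions' theorem and verify hypotheses i)--iii) for the bilinear form $a(t;v_1,v_2)=\int_\Omega \langle\langle \nabla_Y\nabla_Y C_\alpha(x,Ju):Jv_1, Jv_2\rangle\rangle\,dx$. The only cosmetic differences are that the paper divides through by $\rho$ rather than using a $\rho$-weighted inner product on $H$, and obtains coercivity with $\lambda=\gamma>0$ rather than your $\lambda=0$ via Poincar\'e; your explicit verification that $F\in L^2(0,T;H)$ is in fact more detailed than what the paper records.
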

\begin{proof}
We want to apply Theorem \ref{lions} to prove existence and uniqueness of a solution. To this end we note, that (\ref{gateaux-diff}) is linear in $v$, and define for $v\in U$ 
 \begin{equation}\label{A(t)}
  \mathcal{A}(t)v := - \nabla\cdot [\nabla_{Y}\nabla_{Y}C(x,Ju(\alpha)):Jv]
 \end{equation}
 as well as
 \begin{equation}\label{flions}
  f := \nabla\cdot [\nabla_{Y} C_{h}(x,Ju(\alpha))].
 \end{equation}
 In that sense we can reformulate (\ref{gateaux-diff}) as 
 \begin{equation}\label{lions-diff}
  \rho\ddot{v} + \mathcal{A}(t)v = f. 
 \end{equation}
 According to our premises, there is $\rho>0$ with (\ref{massendichte}), so that equation (\ref{gateaux-diff}) together with $\bar{\mathcal{A}}(t)=\frac{1}{\rho}\mathcal{A}(t)$ and $\bar{f}=\frac{1}{\rho}f$ can be expressed as
 \begin{equation}\label{lions-diff-bar}
  \ddot{v} + \bar{\mathcal{A}}(t)v = \bar{f}\;\;\mbox{on}\;\;[0,T]\times\Omega 
 \end{equation}
 with $v(0,x)=\dot{v}(0,x)=0$ for $x\in\Omega$ and $v(t,x)=0$ for $t\in[0,T]$ and $x\in\partial\Omega$. 
 To apply Theorem \ref{lions} to (\ref{lions-diff-bar}), we show at first, that the mapping $\bar{\mathcal{A}}(t): U\rightarrow U'$ is linear and continuous on $U$. 
 The linearity is obvious. It remains to show that $\bar{\mathcal{A}}(t)v\in U'$ for all $v\in U$.
 For this purpose we define 
 \begin{equation*}
  a(t;w,v) := \langle \bar{\mathcal{A}}(t)w,v\rangle_{U'\times U} := \int\limits_\Omega \langle - \nabla\cdot[\nabla_{Y}\nabla_{Y}C(x,Ju(\alpha)):Jw],v\rangle\,d x
 \end{equation*}
 for $v,w\in U$. Using the Gau{\ss}-Ostrogradski theorem yields 
 \begin{equation}\label{a(w,v)}
 a(t;w,v) = \int\limits_{\Omega}\langle\langle \nabla_{Y}\nabla_{Y}C(x,Ju(\alpha)):Jw,Jv\rangle\rangle \, dx. 
 \end{equation}
 From (\ref{a(w,v)}) we see, that $a(t;w,v)$ is linear in $w\in U$ and $v\in U$ for fixed $t\in[0,T]$, and thus defines a bilinear form on $U \times U$. 
 The H\"{o}lder inequality and (\ref{bed2}) imply for $w,v\in U$
 \begin{eqnarray*}
  |a(t;w,v)|
  &=& \Big|\int\limits_{\Omega}\langle\langle \nabla_{Y}\nabla_{Y}C(x,Ju(\alpha)):Jw,Jv\rangle\rangle dx \Big|\\
  && \leq \sum_{i,j,k,l=1}^{3}\sum_{K=1}^{N}\alpha_{K}\int\limits_{\Omega} \big|\partial_{Y_{ij}}\partial_{Y_{kl}}C_{K}(x,Ju(\alpha))\big| \big|\partial_{l}w_{k} \big| \big|\partial_{j}v_{i} \big|dx\\
  && \leq 9\underbrace{\sum_{K=1}^{N}\alpha_{K}\mu_{K}^{[1]}}_{>0}\|w\|_{U}\|v\|_{U}.
 \end{eqnarray*}
 This gives the continuity of $a(t;w,v)$ in $w,v\in U$ and particularly $\bar{\mathcal{A}}(t)v\in U'$ for $v\in U$.\\
 Next we check that conditions i)--iii) of Theorem \ref{lions} to be fulfilled. The assumptions that $Y\mapsto C_{K}(x,Y)$ is three times continuously differentiable for $x\in\Omega$ almost everywhere
 and $u\in\mathcal{A}$ imply that $a(t;w,v)$ is differentiable with respect to $t\in [0,T]$ for $w,v\in U$ validating i).
 Using the condition 
 \begin{equation}\label{symm}
  \partial_{Y_{ij}}\partial_{Y_{kl}}C_{K}(x,Y)=\partial_{Y_{kl}}\partial_{Y_{ij}}C_{K}(x,Y)
 \end{equation}
 for all $i,j,k,l=1,2,3$ and $Y\in\RR^{3\times3}$ and the Gau{\ss}-Ostrogradski theorem, we get the symmetry of $a(t;w,v)$. Thus ii) holds true. 
  It remains to prove iii). Let $v\in U$ and $t\in[0,T]$. Applying (\ref{bed2}) we obtain 
  \begin{eqnarray*}
  a(t;v,v)
  &=& \int\limits_{\Omega}\langle\langle \nabla_{Y}\nabla_{Y}C(x,Ju(\alpha)):Jv,Jv\rangle\rangle dx \\
  && \geq \sum_{K=1}^{N}\alpha_{K}\kappa_{K}^{[1]}\int\limits_{\Omega}\|Jv\|_{F}^{2}dx
  = \gamma \, \|v\|_{U}^{2}
 \end{eqnarray*}
 with $\gamma := \sum_{K=1}^N \alpha_K \kappa_K^{[1]}>0$. If we use this estimate together with the definition of the spaces $U$, $V$ and $H$, we get
 \[a(t;v,v) + \gamma\|v\|_{H}^{2}\geq \gamma\|v\|_{U}^{2}\qquad \mbox{for all } v\in U .  \]
 This gives iii) with $\lambda=\alpha=\gamma$. Thus, all requirements of Theorem \ref{lions} are satisfied, which proves that
 there is for all $\bar{f}\in L^{2}(0,T;H)$ a unique solution $v\in L^{2}(0,T;U)$ of (\ref{lions-diff-bar}) with $\dot{v}\in L^{2}(0,T;H)$ and hence the IBVP (\ref{gateaux-diff})--(\ref{vRandwert})
 has a unique solution $v\in L^{2}(0,T;U)$, too. Because of the continuous embedding of $U$ in $V$ we even have $v\in L^{2}(0,T;V)$, what completes the proof.\\ 
 \end{proof}
 \\[1mm]
 \begin{remark}
 There exists a unique solution $v$ of the IBVP (\ref{gateaux-diff})--(\ref{vRandwert}) for all $h\in\RR^N$ and hence $\mathcal{T}' (\alpha) h$ is well defined on the linear space $\RR^N$
 for any non-isolated $\alpha\in \Drm (\mathcal{T})$. This is important in view of proving that $v=\mathcal{T}' (\alpha) h$ represents the Fr\'{e}chet derivative.\\[1ex]
 \end{remark}
 Our aim is to show, that $\mathcal{T}: \Drm (\mathcal{T})\subset \RN\to L^{2}(0,T;V)$ even is Fr\'{e}chet differentiable, where the Fr\'{e}chet derivative is represented by $v=\mathcal{T}'(\alpha)h$ 
 solving (\ref{gateaux-diff})--(\ref{vRandwert}). To this end our next step is to prove that the mapping $\mathcal{T}' (\alpha) : \RR^N\to  L^{2}(0,T;V)$, $h\mapsto v$, is linear and bounded.
 The linearity follows immediately from the fact, that (\ref{gateaux-diff}) is linear in $v$ and $h$. The continuity is subject of the following theorem.\\
 \begin{theorem}\label{gateaux-stetig}
 Adopt the assumption of Lemma \ref{frechet-system}. The G\^{a}teaux derivative $v=\mathcal{T}'(\alpha)h$ is continuous in $h$ for all $h\in\RR^{N}$, i.e. there is a constant $L_{1}>0$ with $\|v\|_{L^{2}(0,T;V)}\leq L_{1}\|h\|_{\infty}$.\\
 \end{theorem}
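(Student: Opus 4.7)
The plan is to establish the claim through a wave-equation energy estimate for $v$ combined with Gronwall's lemma in the form (\ref{gronwall-const}). I multiply (\ref{gateaux-diff}) pointwise by $\dot{v}$, integrate over $\Omega$, and integrate by parts on the second left-hand side term using the homogeneous Dirichlet boundary condition (\ref{vRandwert}), obtaining
\[
\int_{\Omega}\rho\,\ddot{v}\cdot\dot{v}\,dx + a(t;v,\dot{v}) = \int_{\Omega}\dot{v}\cdot\nabla\cdot\nabla_{Y}C_{h}(x,Ju(t,x))\,dx,
\]
where $a(t;\cdot,\cdot)$ is the bilinear form from (\ref{a(w,v)}) in the proof of Theorem \ref{gateaux-ex}. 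Its symmetry (a consequence of (\ref{symm})) yields $a(t;v,\dot{v}) = \tfrac{1}{2}\tfrac{d}{dt}a(t;v,v) - \tfrac{1}{2}\dot{a}(t;v,v)$, where $\dot{a}$ denotes the derivative of $a$ in its parameter $t$. Introducing the energy $E(t):=\tfrac{1}{2}\|\sqrt{\rho}\,\dot{v}(t,\cdot)\|_{H}^{2} + \tfrac{1}{2}a(t;v,v)$, which vanishes at $t=0$ by (\ref{vAnfangswert}), this rewrites as
\[
\dot{E}(t) = \tfrac{1}{2}\dot{a}(t;v,v) + \int_{\Omega}\dot{v}\cdot\nabla\cdot\nabla_{Y}C_{h}(x,Ju)\,dx.
\]

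The main obstacle is the forcing integral on the right: a further integration by parts would introduce $\|J\dot{v}\|_{L^{2}}$, which is not controlled by $E(t)$. I would instead expand $\nabla\cdot\nabla_{Y}C_{h}(x,Ju(t,x))$ componentwise via the chain rule into a direct spatial derivative (bounded pointwise by $\sum_{K}|h_{K}|\mu_{K}^{[4]}$ through (\ref{beschr3})) and an indirect contribution coupling $\nabla_{Y}\nabla_{Y}C_{K}$ to second spatial derivatives of $u$, bounded through the componentwise consequence of (\ref{bed2}) together with $\|\partial_{l}\partial_{j}u_{k}\|_{L^{\infty}((0,T)\times\Omega)}<M_{3}$ from the admissible set $\mathcal{A}$ in (\ref{bedA}). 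Both contributions scale linearly in $\|h\|_{\infty}$, so that $\|\nabla\cdot\nabla_{Y}C_{h}(\cdot,Ju(t,\cdot))\|_{L^{\infty}(\Omega,\RR^{3})}\leq C^{\ast}\|h\|_{\infty}$ uniformly on $[0,T]$. Combined with H\"older's inequality, finiteness of $|\Omega|$, and the bound $\|\dot{v}\|_{H}\leq \rho_{\min}^{-1/2}\sqrt{2E(t)}$ coming from (\ref{massendichte}), this yields
\[
\Big|\int_{\Omega}\dot{v}\cdot\nabla\cdot\nabla_{Y}C_{h}\,dx\Big| \leq C_{2}\|h\|_{\infty}\sqrt{E(t)}.
\]

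For $\dot{a}(t;v,v)$, differentiating $\nabla_{Y}\nabla_{Y}C_{\alpha}(x,Ju(t,x))$ in $t$ by the chain rule and estimating through (\ref{beschr1}) together with $\|\partial_{l}\dot{u}_{k}\|_{L^{\infty}((0,T)\times\Omega)}<M_{1}$ from (\ref{bedA}) provides a uniform $L^{\infty}$ bound on $\partial_{t}\nabla_{Y}\nabla_{Y}C_{\alpha}(x,Ju)$. Combined with the coercivity $a(t;v,v)\geq \gamma\,\|Jv\|_{L^{2}(\Omega,\RR^{3\times3})}^{2}$ with $\gamma = \sum_{K}\alpha_{K}\kappa_{K}^{[1]}>0$ established in the proof of Theorem \ref{gateaux-ex}, this yields $|\dot{a}(t;v,v)|\leq C_{1}E(t)$. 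Assembling the estimates produces the differential inequality $\dot{E}(t)\leq \tfrac{1}{2}C_{1}E(t)+C_{2}\|h\|_{\infty}\sqrt{E(t)}$. Integrating in $t$ from $0$ to $\tau$ (with $E(0)=0$) and applying (\ref{gronwall-const}) with $a=0$, $b=C_{1}/2$, $k=C_{2}\|h\|_{\infty}$ and $p=\tfrac{1}{2}$ delivers $E(\tau)\leq C_{3}\|h\|_{\infty}^{2}$ on $[0,T]$, with $C_{3}$ depending only on $T$, $C_{1}$, and $C_{2}$.

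Finally, since $v(t,\cdot)\in U = H_{0}^{1}(\Omega,\RR^{3})$, Poincar\'{e}'s inequality (\ref{poincare}) combined with the coercivity gives $\|v(\tau)\|_{U}^{2}\leq (1+C_{\Omega}^{2})\gamma^{-1}a(\tau;v,v)\leq 2(1+C_{\Omega}^{2})\gamma^{-1}E(\tau)\leq C_{4}\|h\|_{\infty}^{2}$, hence $v\in L^{\infty}(0,T;U)$ with norm at most $\sqrt{C_{4}}\,\|h\|_{\infty}$. The continuous embedding $U\hookrightarrow V$ (with unit embedding constant) and boundedness of $(0,T)$ then yield $\|v\|_{L^{2}(0,T;V)}\leq \sqrt{T}\,\|v\|_{L^{\infty}(0,T;U)}\leq L_{1}\|h\|_{\infty}$ with $L_{1}:=\sqrt{T\,C_{4}}$, which is the desired estimate.
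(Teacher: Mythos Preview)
Your argument is correct and follows essentially the same energy-plus-Gronwall route as the paper: multiply by $\dot v$, use the symmetry of the bilinear form to identify the time derivative of the energy, bound $\dot a(t;v,v)$ via (\ref{beschr1}) and $M_1$, bound the forcing term by expanding $\nabla\cdot\nabla_Y C_h(x,Ju)$ with the chain rule, and close with (\ref{gronwall-const}) and Poincar\'{e}. The only noteworthy difference is that you control the second-derivative contribution in the forcing through the $L^\infty$ bound $M_3$ from (\ref{bedA}), whereas the paper uses the $L^2$ bound $M_0$ together with H\"older; both are available in $\mathcal{A}$ and lead to the same conclusion.
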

 \begin{proof}
 Multiplying equation (\ref{gateaux-diff}) by $2\dot{v}$ and integrating over $\Omega$ yield
 \begin{eqnarray*}
 && 2\langle\rho\ddot{v}(t,\cdot),\dot{v}(t,\cdot)\rangle_{L^{2}(\Omega,\RR^3)}
 -2\langle\nabla \cdot [\nabla_{Y}\nabla_{Y}C_{\alpha}(x,Ju(t,x)):Jv(t,\cdot)],\dot{v}(t,\cdot)\rangle_{L^{2}(\Omega,\RR^3)}\\[1ex]
 && = 2\langle\nabla\cdot [\nabla_{Y}C_{h}(x,Ju(t,x))],\dot{v}(t,\cdot)\rangle_{L^{2}(\Omega,\RR^3)}
 \end{eqnarray*}
 for $t\in[0,T]$. Since $u=\mathcal{T} (\alpha ) \in \mathcal{A}$, we have that $v\in L^{2}(0,T;H_{0}^{2}(\Omega,\RR^3))$ (see Lemma \ref{wloka}). 
 Using the Gau{\ss}-Ostrogradski theorem we get 
 \begin{eqnarray}\label{vschwach}
  &&2\sum_{K=1}^{N}\alpha_{K}\int\limits_{\Omega}\langle\langle\nabla_{Y}\nabla_{Y}C_{K}(x,Ju(t,x)):Jv(t,x),J\dot{v}(t,x)\rangle\rangle dx
  +2\langle\rho\ddot{v}(t,\cdot),\dot{v}(t,\cdot)\rangle_{L^{2}(\Omega,\RR^3)} \nonumber \\[1ex]
  &&= 2\langle\nabla\cdot [\nabla_{Y}C_{h}(x,Ju(t,x))],\dot{v}(t,\cdot)\rangle_{L^{2}(\Omega,\RR^3)}.
 \end{eqnarray}
 We define 
 \[a_{1}(t;v,\dot{v}):=\sum_{K=1}^{N}\alpha_{K}\int\limits_{\Omega}\langle\langle\nabla_{Y}\nabla_{Y}C_{K}(x,Ju(t,x)):Jv(t,x),J\dot{v}(t,x)\rangle\rangle\, dx\]
 and 
 \[f_{1}:=\nabla\cdot [\nabla_{Y}C_{h}(x,Ju(t,x))].\]
 The bilinear form $a_{1}(t;\cdot,\cdot)$ is symmetric on $V\times V$ for all $t\in[0,T]$ because of 
  \[\partial_{Y_{ij}}\partial_{Y_{kl}}C_{K}(x,Y)=\partial_{Y_{kl}}\partial_{Y_{ij}}C_{K}(x,Y)\]
 for all $i,j,k,l=1,2,3$ and $Y\in\RR^{3\times3}$ (see proof of Theorem \ref{gateaux-ex}), whence
 \[2a_{1}(t;v,\dot{v}) + \rho\partial_{t}\|\dot{v}(t,\cdot)\|_{L^{2}(\Omega,\RR^3)}^{2} = 2\langle f_{1}(t,\cdot), \dot{v}(t,\cdot)\rangle_{L^{2}(\Omega,\RR^3)}\]
 and thereby
 \[\partial_{t}[a_{1}(t;v,v) + \rho\|\dot{v}(t,\cdot)\|_{L^{2}(\Omega,\RR^3)}^{2}] = a'_{1}(t;v,v) + 2\langle f_{1}(t,\cdot), \dot{v}(t,\cdot)\rangle_{L^{2}(\Omega,\RR^3)} \]
 follows. Integrating the last equation over $[0,\tau]$ with $0\leq\tau\leq T$ yields together with $v(0,x)=\dot{v}(0,x)=0$
 \begin{equation}\label{gleichung-v}
  a_{1}(t;v,v) + \rho\|\dot{v}(\tau,\cdot)\|_{L^{2}(\Omega,\RR^3)}^{2} = \int\limits_{0}^{\tau}a'_{1}(t;v,v)dt + 2\int\limits_{0}^{\tau}\langle f_{1}(t,\cdot), \dot{v}(t,\cdot)\rangle_{L^{2}(\Omega,\RR^3)}dt.
 \end{equation}
 We proceed by appropriately estimating the different summands in equation (\ref{gleichung-v}). 
 For the first one, we obtain using (\ref{bed2})
 \begin{equation*}
 a_{1}(t;v,v) \geq \sum_{K=1}^{N}\alpha_{K}\kappa_{K}^{[1]}\|Jv(\tau,\cdot)\|_{L^{2}(\Omega,\RR^{3\times3})}^{2}
 \end{equation*}
 and thus with the assumptions of Theorem \ref{theorem21imp} 
 \begin{equation}\label{a1}
 a_{1}(t;v,v) \geq \kappa(\alpha)\|Jv(\tau,\cdot)\|_{L^{2}(\Omega,\RR^{3\times3})}^{2}.
 \end{equation}
 Using the conditions (\ref{beschr1}) and (\ref{bedA}) leads to
 \begin{eqnarray*}
 a'_{1}(t;v,v)
 &=& \sum_{K=1}^{N}\alpha_{K}\int\limits_{\Omega}\langle\langle[\nabla_{Y}\nabla_{Y}\nabla_{Y}C_{K}(x,Ju(t,x)):J\dot{u}(\alpha)]:Jv(t,x),Jv(t,x)\rangle\rangle dx\\[1ex]
 &\leq& 9\sum_{K=1}^{N}\alpha_{K}\mu_{K}^{[2]}M_{1}\int\limits_{\Omega}(\sum_{i=1}^{3}\sum_{j=1}^{3}|\partial_{j}v_{i}|)^{2}dx
\end{eqnarray*}
 and we get with an application of H\"{o}lder's inequality
\begin{equation*}
 a'_{1}(t;v,v) \leq 81\sum_{K=1}^{N}\alpha_{K}\mu_{K}^{[2]}M_{1}\|Jv(t,\cdot)\|_{L^{2}(\Omega,\RR^{3\times3})}^{2}.
\end{equation*}
Finally Theorem \ref{theorem21imp} and (\ref{Cbar}) show
\begin{equation}\label{a1'}
 a'_{1}(t;v,v) \leq \frac{729}{8}\eta\mu(\alpha)M_{1}\|Jv(t,\cdot)\|_{L^{2}(\Omega,\RR^{3\times3})}^{2}.
\end{equation}
For the last term of equation (\ref{gleichung-v}) we estimate by means of (\ref{bed2}), (\ref{beschr3}), (\ref{bedA}) and several applications of the H\"{o}lder's inequality
\begin{eqnarray*}
 \langle f_{1}(t,\cdot), \dot{v}(t,\cdot)\rangle_{L^{2}(\Omega,\RR^3)}
 &=& \int\limits_{\Omega}\sum_{k=1}^{3}\sum_{l=1}^{3}\partial_{l}[\partial_{Y_{kl}}C_{h}(x,Ju(\alpha))]\dot{v}_{k}(t,x)\,dx\\[1ex]
 &&\hspace*{-3.5cm}\leq 3\sum_{K=1}^{N}|h_K|\mu_{K}^{[4]}\sum_{k=1}^{3}\int\limits_{\Omega}|\dot{v}_{k}(t,x)|\,dx
 +\sum_{K=1}^{N}|h_K|\mu_{K}^{[1]}\sum_{i,j,k,l=1}^{3}\int\limits_{\Omega}|\dot{v}_{k}(t,x)||\partial_{l}\partial_{j}u_{i}(\alpha)|\,dx\\[1ex]
 &&\hspace*{-3.5cm}\leq 3\sqrt{3\mbox{vol}(\Omega)}\sum_{K=1}^{N}|h_K|\mu_{K}^{[4]}\|\dot{v}(t,\cdot)\|_{L^{2}(\Omega,\RR^3)} + 27M_{0}\sum_{K=1}^{N}|h_K|\mu_{K}^{[1]}\|\dot{v}(t,\cdot)\|_{L^{2}(\Omega,\RR^3)}.
\end{eqnarray*}
Using (\ref{massendichte}) we obtain
\begin{equation}\label{fvpunkt}
 \langle f_{1}(t,\cdot), \dot{v}(t,\cdot)\rangle_{L^{2}(\Omega,\RR^3)} \leq 3\rho_{\min}^{-\frac{1}{2}}\sum_{K=1}^{N}|h_K|
\Big( \sqrt{3\mbox{vol}(\Omega)}\mu_{K}^{[4]}+9M_{0}\mu_{K}^{[1]} \Big) \rho^{\frac{1}{2}}\|\dot{v}(t,\cdot)\|_{L^{2}(\Omega,\RR^3)}.
\end{equation}
Putting all these estimates (\ref{gleichung-v}), (\ref{a1}), (\ref{a1'}), (\ref{fvpunkt}) and (\ref{Cbar}) together and rearranging terms, we finally arrive with 
\[S_{1}:=\rho_{\min}^{-\frac{1}{2}}\sum_{K=1}^{N}(\sqrt{3\mbox{vol}(\Omega)}\mu_{K}^{[4]}+9M_{0}\mu_{K}^{[1]})\]
at 
\begin{eqnarray*}
 && \kappa(\alpha)\|Jv(\tau,\cdot)\|_{L^{2}(\Omega,\RR^{3\times3})}^{2} + \rho\|\dot{v}(\tau,\cdot)\|_{L^{2}(\Omega,\RR^3)}^{2}\\
 &\leq& \frac{729\mu(\alpha)}{8\kappa(\alpha)}\eta M_{1}\int\limits_{0}^{\tau}\kappa(\alpha)\|Jv(t,\cdot)\|_{L^{2}(\Omega,\RR^{3\times3})}^{2} + \rho\|\dot{v}(t,\cdot)\|_{L^{2}(\Omega,\RR^3)}^{2}dt +\\
 &&+ 6\|h\|_{\infty}S_{1}\int\limits_{0}^{\tau}\bigg(\kappa(\alpha)\|Jv(t,\cdot)\|_{L^{2}(\Omega,\RR^{3\times3})}^{2} + \rho\|\dot{v}(t,\cdot)\|_{L^{2}(\Omega,\RR^3)}^{2}\bigg)^{\frac{1}{2}}dt.
\end{eqnarray*}
Setting in (\ref{gronwall-const}) $a=0$, $p=\frac{1}{2}$, 
\[\psi(\tau) = \kappa(\alpha)\|Jv(\tau,\cdot)\|_{L^{2}(\Omega,\RR^{3\times3})}^{2} + \rho\|\dot{v}(\tau,\cdot)\|_{L^{2}(\Omega,\RR^3)}^{2},\]
\[b=\frac{729\mu(\alpha)}{8\kappa(\alpha)}\eta M_{1}\]
and
\[k= 6\|h\|_{\infty}S_{1},\]
we get for $\tau\in[0,T]$ 
\begin{equation}
 \kappa(\alpha)\|Jv(\tau,\cdot)\|_{L^{2}(\Omega,\RR^{3\times3})}^{2} + \rho\|\dot{v}(\tau,\cdot)\|_{L^{2}(\Omega,\RR^3)}^{2}
 \leq \frac{k^{2}}{b^{2}}\big(\exp(\frac{1}{2}b\tau)-1\big)^{2}
\end{equation}
and hence
\begin{equation}\label{vUungl}
 \|v(\tau,\cdot)\|_{U} \leq \bar{L}_{1}S(\tau)\|h\|_{\infty}
\end{equation}
with 
\[\bar{L}_{1}:=\frac{16S_{1}\sqrt{\kappa(\alpha)}}{243\mu(\alpha)\eta M_{1}}\]
and
\begin{equation}\label{S-tau} S(\tau):=\exp\Big(\frac{729\mu(\alpha)}{16\kappa(\alpha)}\eta M_{1}\tau\Big)-1. \end{equation}
Using the fact, that $S(\tau)$ is monotonically increasing for $\tau\in[0,T]$, and the mean value theorem, we derive 
\begin{equation*}
 \|v\|_{L^{2}(0,T;U)}\leq \bar{L}_{1}S(T)T\|h\|_{\infty}
\end{equation*}
and with Poincar\'{e}'s inequality (\ref{poincare})
\begin{equation}\label{vungl}
 \|v\|_{L^{2}(0,T;V)}\leq L_{1}\|h\|_{\infty},
\end{equation}
where
\[L_{1}:=\sqrt{1+C_{\Omega}}\bar{L}_{1}\Big(\exp\Big(\frac{729\mu(\alpha)}{16\kappa(\alpha)}\eta M_{1}T\Big)-1\Big)T>0.\]
This completes the proof.\\
\end{proof}
\\[2mm]
To prove the Fr\'{e}chet differentiability of $\mathcal{T}$ it remains to show the convergence
\begin{equation}\label{glm-konv}
 \lim_{h\to0}\frac{\|u(\alpha+h)-u(\alpha)-v\|_{L^{2}(0,T;V)}}{\|h\|_{\infty}}=0\qquad \mbox{for } v=\mathcal{T}' (\alpha) h.
\end{equation}
To prove this we need a further, technical result.\\
\begin{lemma}
 Let $t\in[0,T]$ and $x\in\Omega$. Then $\tilde{u}=u(\alpha+h)-u(\alpha)$ satisifies
 \begin{equation}\label{ungl4-2}
  \bigg(\sum_{k,l=1}^{3}\int\limits_{\Omega}|\partial_{l}\dot{\tilde{u}}_{k}|^{4}dx\bigg)^{\frac{1}{4}} \leq \sqrt{2M_{1}}\|J\dot{\tilde{u}}(t,\cdot)\|_{L^{2}(\Omega,\RR^{3\times 3})}^{\frac{1}{2}},
 \end{equation}
 provided that $u(\alpha+h),u(\alpha)\in\mathcal{A}$.\\[1ex]
\end{lemma}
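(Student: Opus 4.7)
The plan is to exploit the pointwise bound on $\partial_l \dot{u}_k$ provided by the definition of $\mathcal{A}$ and then reduce the $L^4$ norm to the $L^2$ norm via the elementary inequality $|\xi|^4 \le C^2 |\xi|^2$ whenever $|\xi| \le C$.

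First I would observe that since both $u(\alpha+h)$ and $u(\alpha)$ lie in $\mathcal{A}$, the definition in (\ref{bedA}) yields
\[
\|\partial_l \dot{u}_k(\alpha+h)\|_{L^\infty((0,T)\times\Omega)} < M_1, \qquad \|\partial_l \dot{u}_k(\alpha)\|_{L^\infty((0,T)\times\Omega)} < M_1
\]
for $k,l=1,2,3$. By the triangle inequality,
\[
|\partial_l \dot{\tilde{u}}_k(t,x)| \le |\partial_l \dot{u}_k(\alpha+h)(t,x)| + |\partial_l \dot{u}_k(\alpha)(t,x)| \le 2 M_1
\]
for almost every $(t,x) \in (0,T)\times\Omega$.

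Next I would use this pointwise estimate to bound the fourth power by the second power: $|\partial_l \dot{\tilde{u}}_k|^{4} \le (2M_1)^2 |\partial_l \dot{\tilde{u}}_k|^{2}$ almost everywhere. Integrating over $\Omega$, summing over $k,l=1,2,3$, and recognising that
\[
\sum_{k,l=1}^{3} \int_{\Omega} |\partial_l \dot{\tilde{u}}_k(t,x)|^2\, dx = \|J\dot{\tilde{u}}(t,\cdot)\|_{L^2(\Omega,\RR^{3\times 3})}^{2},
\]
I obtain
\[
\sum_{k,l=1}^{3} \int_{\Omega} |\partial_l \dot{\tilde{u}}_k(t,x)|^4\, dx \le (2M_1)^2 \, \|J\dot{\tilde{u}}(t,\cdot)\|_{L^2(\Omega,\RR^{3\times 3})}^{2}.
\]
Taking the fourth root of both sides gives the claimed bound (\ref{ungl4-2}).

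There is no real obstacle here: the whole argument is a one-line consequence of the uniform $L^\infty$ bound on $\partial_l \dot{u}_k$ guaranteed by membership in $\mathcal{A}$, combined with the interpolation $\|\cdot\|_{L^4}^{4} \le \|\cdot\|_{L^\infty}^{2} \|\cdot\|_{L^2}^{2}$ applied componentwise. The only point to be careful about is the factor $2M_1$ coming from the triangle inequality (as opposed to $M_1$), which is exactly what produces the constant $\sqrt{2M_1}$ on the right-hand side.
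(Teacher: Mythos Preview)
Your argument is correct and matches the paper's proof essentially line for line: both use the triangle inequality together with the $L^\infty$ bound from membership in $\mathcal{A}$ to get $|\partial_l\dot{\tilde u}_k|\le 2M_1$, and then the elementary inequality $|\xi|^4\le (2M_1)^2|\xi|^2$ (the paper writes this as $(|\xi|/2M_1)^4\le(|\xi|/2M_1)^2$) before integrating and taking the fourth root.
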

\begin{proof}
 Using the triangle inequality and (\ref{bedA}) for $k,l=1,2,3$ with $M_{1}>0$ (see Theorem \ref{theorem21imp}), we can show 
 \[\frac{|\partial_{l}\dot{\tilde{u}}_{k}|}{2M_{1}}\leq1\]
 and thusly
 \begin{equation}\label{beweisungl4-2}
  \bigg(\frac{|\partial_{l}\dot{\tilde{u}}_{k}|}{2M_{1}}\bigg)^{4}\leq\bigg(\frac{|\partial_{l}\dot{\tilde{u}}_{k}|}{2M_{1}}\bigg)^{2}.
 \end{equation}
 Using this we get 
 \begin{eqnarray*}
  \bigg(\sum_{k,l=1}^{3}\int\limits_{\Omega}|\partial_{l}\dot{\tilde{u}}_{k}|^{4}dx\bigg)^{\frac{1}{4}}
  &=&\bigg((2M_{1})^{4}\sum_{k,l=1}^{3}\int\limits_{\Omega}\bigg(\frac{|\partial_{l}\dot{\tilde{u}}_{k}|}{2M_{1}}\bigg)^{4}dx\bigg)^{\frac{1}{4}}\\
  &\leq & \bigg((2M_{1})^{4}\sum_{k,l=1}^{3}\int\limits_{\Omega}\bigg(\frac{|\partial_{l}\dot{\tilde{u}}_{k}|}{2M_{1}}\bigg)^{2}dx\bigg)^{\frac{1}{4}}\\
  &=& \sqrt{2M_{1}}\|J\dot{\tilde{u}}(t,\cdot)\|_{L^{2}(\Omega,\RR^{3\times 3})}^{\frac{1}{2}}.
 \end{eqnarray*}
\end{proof}
\\[2mm]
We are now able to formulate the main result of the article whose proof is postponed to the appendix.\\
\begin{theorem}\label{thm-glm-konv}
Adopt the assumptions of Lemma \ref{frechet-system} and let $v=\mathcal{T}' (\alpha)h$. There is a constant $L_{2}>0$ depending only on $\Omega$, $T$ and $\alpha$ such that 
\begin{equation}\label{glm-konv-ungl}
  \|u(\alpha+h)-u(\alpha)-v\|_{L^{2}(0,T;V)}\leq L_{2}\|h\|_{\infty}^{\frac{3}{2}}\qquad \mbox{for }\|h\|_{\infty}\rightarrow 0.\vspace{2mm}
\end{equation}
\end{theorem}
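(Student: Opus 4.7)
Set $\tilde u := u(\alpha+h) - u(\alpha)$ and $w := \tilde u - v$. The plan is to derive a linear hyperbolic equation for $w$ with a small right-hand side and to apply the same energy method that was used in the proof of Theorem \ref{gateaux-stetig}. Expanding $\nabla_Y C_\alpha(x,\cdot)$ to second order and $\nabla_Y C_h(x,\cdot)$ to first order around $Y = Ju(\alpha)$ with integral remainder, one obtains
\[
\nabla_Y C_{\alpha+h}(x,Ju(\alpha+h))-\nabla_Y C_\alpha(x,Ju(\alpha)) = \nabla_Y\nabla_Y C_\alpha(x,Ju(\alpha)):J\tilde u + \nabla_Y C_h(x,Ju(\alpha)) + R_1 + R_2,
\]
with
\[
R_1 := \int_0^1 (1-r)\,\nabla_Y\nabla_Y\nabla_Y C_\alpha(x,Ju(\alpha)+rJ\tilde u):(J\tilde u,J\tilde u)\,dr
\]
and $R_2 := \int_0^1 \nabla_Y\nabla_Y C_h(x,Ju(\alpha)+rJ\tilde u):J\tilde u\,dr$. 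Subtracting the equation for $v$ from Lemma \ref{frechet-system} then gives the PDE for $w$:
\[
\rho\ddot w - \nabla\cdot[\nabla_Y\nabla_Y C_\alpha(x,Ju(\alpha)):Jw] = \nabla\cdot(R_1+R_2),\quad w(0)=\dot w(0)=0,\quad w|_{\partial\Omega}=0.
\]

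I would then reproduce verbatim the energy argument of Theorem \ref{gateaux-stetig}: multiplication by $2\dot w$, integration over $\Omega$, the Gauss--Ostrogradski theorem combined with the symmetry (\ref{symm}) of $\nabla_Y\nabla_Y C_K$, integration in time, and $w(0)=\dot w(0)=0$ lead to
\[
\rho\|\dot w(\tau)\|_{L^2}^2 + a_1(\tau;w,w) = \int_0^\tau a'_1(t;w,w)\,dt + 2\int_0^\tau\!\!\int_\Omega \dot w\cdot\nabla\cdot(R_1+R_2)\,dx\,dt .
\]
As in Theorem \ref{gateaux-stetig}, (\ref{bed2}) yields $a_1(\tau;w,w)\ge \kappa(\alpha)\|Jw(\tau)\|_{L^2}^2$ and (\ref{beschr1}) together with (\ref{bedA}) yields $|a'_1(t;w,w)| \le C\bigl[\kappa(\alpha)\|Jw(t)\|_{L^2}^2 + \rho\|\dot w(t)\|_{L^2}^2\bigr]$. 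The only new step is to estimate $\|\nabla\cdot(R_1+R_2)(t,\cdot)\|_{L^2(\Omega)}$.

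The central claim is that $\|\nabla\cdot(R_1+R_2)(t,\cdot)\|_{L^2} \le C\|h\|_\infty^{3/2}$, uniformly in $t\in[0,T]$. Expanding the divergence via the chain rule and bounding each derivative of $C_\alpha,C_h$ through (\ref{beschr1})--(\ref{beschr6}), every resulting summand is a product of $|J\tilde u|^a|\partial J\tilde u|^b$ with $a+b\ge 2$, possibly multiplied by $\|h\|_\infty$ (when a derivative of $C_h$ appears) and by pointwise-bounded factors such as $|J\dot u|\le M_1$ or $|JJu|\le M_3$ from (\ref{bedA}). Pure squares $|J\tilde u|^2$ contribute $\|J\tilde u\|_{L^4}^2 \le C\|\tilde u\|_{H^2}^2 \le C\|h\|_\infty^2$ via Lemma \ref{L-ungl-h2} and Theorem \ref{theorem21imp}; hence all contributions coming from $\nabla\cdot R_2$, and the $|J\tilde u|^2$-type contributions in $\nabla\cdot R_1$, are already $O(\|h\|_\infty^2)$ in $L^2$. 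The delicate term is the cross product $|J\tilde u|\cdot|\partial J\tilde u|$ that arises when the divergence acts on one of the $J\tilde u$ factors of $R_1$. Here I would invoke the same $L^\infty\cap L^2 \hookrightarrow L^4$ interpolation mechanism as in the lemma preceding the theorem, but applied to $\partial J\tilde u = \partial^2\tilde u$: by $\mathcal A$ we have $\|\partial J\tilde u\|_{L^\infty}\le 2M_3$, and by Theorem \ref{theorem21imp} $\|\partial J\tilde u\|_{L^2}\le \|\tilde u\|_{H^2}\le C\|h\|_\infty$, so
\[
\|\partial J\tilde u\|_{L^4}^4 \le \|\partial J\tilde u\|_{L^\infty}^2\|\partial J\tilde u\|_{L^2}^2 \le C\|h\|_\infty^2 ,
\]
and therefore
\[
\bigl\|\,|J\tilde u|\cdot|\partial J\tilde u|\,\bigr\|_{L^2}^2 \le \|J\tilde u\|_{L^4}^2\,\|\partial J\tilde u\|_{L^4}^2 \le C\|h\|_\infty^2\cdot C\|h\|_\infty = C\|h\|_\infty^3 .
\]

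Putting it all together, Cauchy--Schwarz and (\ref{massendichte}) give $\bigl|\int_\Omega \dot w\cdot\nabla\cdot(R_1+R_2)\,dx\bigr| \le C\|h\|_\infty^{3/2}\rho_{\min}^{-1/2}\bigl(\rho\|\dot w\|_{L^2}^2\bigr)^{1/2}$. Setting $\psi(\tau):=\rho\|\dot w(\tau)\|_{L^2}^2 + \kappa(\alpha)\|Jw(\tau)\|_{L^2}^2$, the combined estimate reads $\psi(\tau) \le b\int_0^\tau\psi(t)\,dt + k\int_0^\tau\psi(t)^{1/2}\,dt$ with $b$ independent of $h$ and $k$ proportional to $\|h\|_\infty^{3/2}$. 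Gronwall's inequality (\ref{gronwall-const}) with $a=0$, $p=1/2$ delivers $\psi(\tau)^{1/2} \le (k/b)(\exp(b\tau/2)-1)$, hence $\|w(\tau)\|_U \le C\|h\|_\infty^{3/2}$; integrating over $[0,T]$ and invoking Poincar\'e's inequality (\ref{poincare}) yields the claimed bound in $L^2(0,T;V)$. The main obstacle is precisely this cross term: a bound that only uses the $L^\infty$-control of $\partial J\tilde u$ produces $\|\nabla\cdot R\|_{L^2} \le C\|h\|_\infty$, which only reproduces the Lipschitz rate of Theorem \ref{gateaux-stetig} and misses Fr\'echet differentiability by exactly the factor $\|h\|_\infty^{1/2}$ that the interpolation supplies.
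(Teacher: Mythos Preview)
Your argument is correct and yields the same $\|h\|_\infty^{3/2}$ rate, but the route differs from the paper's in one structural point. After deriving the equation for $d$ (your $w$), the paper integrates by parts in \emph{space}, which pairs the remainder with $J\dot d$ rather than with $\dot d$; since $\|J\dot d\|_{L^2}$ is not part of the energy $\psi$, the paper then integrates by parts in \emph{time}, turning $J\dot d$ into $Jd$ at the cost of a boundary term at $t=\tau$ (controlled via $\|Jd(\tau)\|_{L^2}\le\|J\tilde u(\tau)\|_{L^2}+\|Jv(\tau)\|_{L^2}\le C\|h\|_\infty$, which produces the nonzero $a\sim\|h\|_\infty^3$ in Gronwall) and bulk terms containing $J\dot{\tilde u}$. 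The $L^\infty$--$L^2\hookrightarrow L^4$ interpolation is then applied to $J\dot{\tilde u}$ using the $M_1$-bound, exactly the lemma stated before the theorem. You instead keep the divergence, estimate $\|\nabla\cdot(R_1+R_2)\|_{L^2(\Omega)}$ directly by the chain rule, and pair it with $\dot w$, which \emph{is} controlled by $\psi^{1/2}$; the same interpolation is applied to $\partial^2\tilde u$ via the $M_3$-bound. Your route is more streamlined (no time integration by parts, no boundary term, $a=0$ throughout) but uses one more spatial derivative of $C$: the mixed fourth-order bound on $\partial_l\nabla_Y^3 C_K$, i.e.\ the constant $\mu_K^{[7]}$ together with (\ref{vertauschen-partial}), which the paper lists among its standing hypotheses but does not actually invoke in its own proof of this theorem.
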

\\
This shows, that $\mathcal{T} : \RN \to L^2 (0,T;V)$ in fact is Fr\'{e}chet differentiable, where the Fr\'{e}chet derivative $\mathcal{T}'(\alpha) : \RR^N\to  L^2 (0,T;V)$ is
defined as linear and bounded mapping by $v:=\mathcal{T}'(\alpha)h$, $h\in\RR^N$, and $v$ is
characterized by the unique (weak) solution of the IBVP (\ref{gateaux-diff})--(\ref{vRandwert}).


\section{The adjoint operator $\mathcal{T}'(\alpha)^*$ of the Fr\'{e}chet derivative}


In the last section we have shown, that the parameter-to-solution operator $\mathcal{T}$ is Fr\'{e}chet differentiable. In the following we want to determine the adjoint operator 
of the Fr\'{e}chet derivative $\mathcal{T}'(\alpha)^*$. The adjoint is important when applying iterative solvers as the Landweber method or Newton-type methods to the inverse
problem $\mathcal{T} (\alpha) = u^{meas}$ or related problems.\\
According to Lemma \ref{frechet-system} $v=\mathcal{T}'(\alpha)h$ solves the system of linear differential equations (\ref{gateaux-diff}) with homogeneous initial and boundary
values (\ref{vAnfangswert}), (\ref{vRandwert}). Let us consider the hyperbolic equation (\ref{gateaux-diff}) with arbitrary right-hand side $f\in L^2 (0,T;H)$,
\begin{equation}\label{gateaux-diff-2}
\rho\ddot{v}(t,x)-\nabla\cdot [\nabla_{Y}\nabla_{Y}C_{\alpha}(x,Ju(t,x)):Jv(t,x)] = f.
\end{equation}
Theorem \ref{gateaux-ex} says, that together with (\ref{vAnfangswert}), (\ref{vRandwert}) this system has a unique solution $v$ in $L^{2}(0,T;V)$ 
which is even in $L^{2}(0,T;U)$. Let the initial and boundary values
(\ref{vAnfangswert}), (\ref{vRandwert}) be fixed. Then we define by $\mathcal{X}$ the space, which consists of all solutions of (\ref{gateaux-diff-2}) for $f\in L^2 (0,T;H)$, i.e. if we define
the mapping $B: L^{2}(0,T;U) \to H^{-1} (0,T;H)$ by
\begin{equation}\label{Bv}
 Bv := \rho\ddot{v} - \nabla\cdot [\nabla_{Y}\nabla_{Y}C_{\alpha}(x,Ju):Jv],
\end{equation}
then $\mathcal{X} = B^{-1} (L^2 (0,T;H))\subset L^{2}(0,T;U)$. The mapping $B: \mathcal{X}\to L^2 (0,T;H)$ is then bijective
since (\ref{gateaux-diff-2}) with (\ref{vAnfangswert}), (\ref{vRandwert}) is uniquely solvable; we have $B^{-1} : L^2 (0,T;H)\to \mathcal{X}$, $f\mapsto v$, where $v$ solves (\ref{gateaux-diff-2}) with (\ref{vAnfangswert}), (\ref{vRandwert}).
Endowed with the norm $\|v\|_{\mathcal{X}}=\|B v\|_{L^{2}(0,T;H)}$, $\mathcal{X}$ turns into a Hilbert space, which is a closed subspace of $L^{2}(0,T;U)$. The embedding $\mathcal{X} \hookrightarrow L^{2}(0,T;U)$ even
is continuous.\\
\begin{lemma}
The embedding $\mathcal{X} \hookrightarrow L^{2}(0,T;U)$ is continuous, i.e. there is a constant $C>0$ not depending on $v$ such that
\[ \|v\|_{L^{2}(0,T;U)}\leq C\|v\|_{\mathcal{X}}\,,\qquad v\in \mathcal{X}\,.    \]
\end{lemma}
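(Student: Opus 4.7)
The plan is to reduce the statement directly to the continuity assertion at the end of Lions' theorem (Theorem~\ref{lions}), which was already verified to apply to the operator $B$ in the proof of Theorem~\ref{gateaux-ex}.

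More precisely, given $v\in \mathcal{X}$, I would set $f:=Bv\in L^{2}(0,T;H)$. By the very definition of $\mathcal{X}$, together with the homogeneous initial and boundary conditions (\ref{vAnfangswert}), (\ref{vRandwert}), the function $v$ is then the unique weak solution of the IBVP (\ref{gateaux-diff-2}) with right-hand side $f$ and vanishing initial data $v_{0}=0\in U$, $v_{1}=0\in H$. Dividing (\ref{gateaux-diff-2}) by $\rho$ and using the lower bound $\rho(x)\geq \rho_{\min}>0$ from (\ref{massendichte}), this IBVP takes exactly the form (\ref{lions-eq}) treated in Theorem~\ref{lions} with $\bar f=f/\rho$ and operator $\bar{\mathcal A}(t)=\tfrac{1}{\rho}\mathcal{A}(t)$, whose bilinear form was shown in the proof of Theorem~\ref{gateaux-ex} to satisfy assumptions i)--iii).

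Next I would invoke the continuity statement of Theorem~\ref{lions}: the solution map
\[
   \{\bar f,v_{0},v_{1}\}\;\longmapsto\;\{v,v'\}
\]
is continuous from $L^{2}(0,T;H)\times U\times H$ into $L^{2}(0,T;U)\times L^{2}(0,T;H)$. Applied to the present situation with $v_{0}=0$, $v_{1}=0$, this yields a constant $C_{0}>0$ (independent of $v$, depending only on $\alpha$, $\rho_{\min}$, $u$, and $T$ via the bilinear form) such that
\[
   \|v\|_{L^{2}(0,T;U)}\;\leq\; C_{0}\,\|\bar f\|_{L^{2}(0,T;H)}\;\leq\; C_{0}\,\rho_{\min}^{-1}\,\|f\|_{L^{2}(0,T;H)}.
\]
Setting $C:=C_{0}\rho_{\min}^{-1}$ and recalling $\|f\|_{L^{2}(0,T;H)}=\|Bv\|_{L^{2}(0,T;H)}=\|v\|_{\mathcal{X}}$ gives the claim.

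There is no genuine obstacle here: the only thing to check is that the verification of Lions' assumptions carried out in the proof of Theorem~\ref{gateaux-ex} applies uniformly in $v\in\mathcal{X}$, which it does since the coefficients of the bilinear form depend only on the fixed $u=\mathcal{T}(\alpha)$ and $\alpha$, not on $v$. Hence the constant $C$ in the embedding $\mathcal{X}\hookrightarrow L^{2}(0,T;U)$ is determined entirely by $\alpha$, the bounds from $\mathcal{A}$, and $T$.
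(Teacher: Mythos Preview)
Your proposal is correct and follows essentially the same route as the paper: both arguments invoke the continuity assertion of Lions' theorem (Theorem~\ref{lions}), applied to the IBVP with operator $\bar{\mathcal A}(t)=\tfrac{1}{\rho}\mathcal A(t)$ and right-hand side $\bar f=f/\rho$, using the verification of hypotheses i)--iii) already carried out in the proof of Theorem~\ref{gateaux-ex}. If anything, your treatment of the $\rho$ factor is slightly more careful than the paper's, which glosses over the distinction between $f$ and $\bar f$ in the final chain of equalities.
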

\begin{proof}
Theorem \ref{lions} states that the map $\bar{f} \mapsto v$ with $\bar{f}=\frac{1}{\rho}f$ and $v\in \mathcal{X}\subset L^{2}(0,T;U)$ is continuous as a map from $L^{2}(0,T;H)$ to $L^{2}(0,T;U)$.
Hence there is a constant $C>0$ independent from $v$ such that
\[    \|v\|_{L^{2}(0,T;U)}\leq C \|\bar{f}\|_{L^{2}(0,T;H)} = C \|BB^{-1}\bar{f}\|_{L^{2}(0,T;H)} = C \|v\|_{\mathcal{X}}   \]
and the assertion is proved.\\
\end{proof}
\\[2mm]
The next theorem gives a representation of the adjoint operator $\mathcal{T}'(\alpha)^* : \mathcal{X}' \to \RR^N$ for fixed $\alpha\in \RN$.\\
\begin{theorem}\label{Tadjungiert}
Let $\alpha\in \mathrm{int} \Drm (\mathcal{T})\subset \RN$ be fixed and $w\in \mathcal{X}'$.
The adjoint operator of the Fr\'{e}chet derivative $\mathcal{T}' (\alpha) : \RR^N \to \mathcal{X}$ is given by 
\begin{equation}
 \mathcal{T}'(\alpha)^{*}w = \Big[-\int\limits_{0}^{T}\int\limits_{\Omega}\langle\langle\nabla_{Y}C_{K}(x,Ju(t,x)),J(B^{-1})^{*}w(t,x)\rangle\rangle \,dx\,dt\Big]_{K=1,...,N}\in\mathbb{R}^{N},
\end{equation}
where $p:=(B^{-1})^{*}w$ is the weak solution of the hyperbolic, backward IBVP
\begin{eqnarray}
&&\hspace*{-3mm}\label{backward-1} \rho\ddot{p}(t,x)-\nabla\cdot [\nabla_{Y}\nabla_{Y}C_{\alpha}(x,Ju(t,x)):Jp(t,x)]=w(t,x)\\[1ex]
&&\hspace*{-3mm}\label{backward-2} p(T,x)=\dot{p}(T,x)=0\qquad x\in\Omega\\[1ex]
&&\hspace*{-3mm}\label{backward-3} \big(p(t,\xi)\cdot \nabla_{Y}\nabla_{Y}C_{\alpha}(\xi,Ju(t,\xi))\big)\cdot \nu (\xi)=0\qquad (t,\xi)\in[0,T]\times\partial\Omega. 
\end{eqnarray}
\end{theorem}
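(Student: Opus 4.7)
My approach is to exploit the factorization $\mathcal{T}'(\alpha) = B^{-1}\circ L$, where $L:\RR^{N}\to L^{2}(0,T;H)$ sends $h$ to the right-hand side $Lh := \nabla\cdot[\nabla_Y C_h(\cdot,Ju)]$ of the sensitivity equation (\ref{gateaux-diff}). By Lemma \ref{frechet-system} and Theorem \ref{gateaux-ex}, $v=\mathcal{T}'(\alpha)h$ is the unique element of $\mathcal{X}$ with $Bv=Lh$, so $\mathcal{T}'(\alpha)^{*} = L^{*}\circ (B^{-1})^{*}$. The statement then splits into two independent tasks: identifying $p:=(B^{-1})^{*}w\in L^{2}(0,T;H)$ as the weak solution of the backward problem (\ref{backward-1})--(\ref{backward-3}), and computing $L^{*}p\in\RR^{N}$.

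For the first task I would start from the defining duality relation
\[\langle v,w\rangle_{\mathcal{X},\mathcal{X}'} \;=\; \langle Bv,p\rangle_{L^{2}(0,T;H)} \;=\; \int_{0}^{T}\!\!\int_{\Omega}\big[\rho\ddot v - \nabla\cdot(\nabla_Y\nabla_Y C_\alpha(x,Ju):Jv)\big]\cdot p\,dx\,dt,\]
valid for all $v\in\mathcal{X}$, and perform integration by parts twice in time and twice in space. The temporal boundary terms vanish thanks to the initial conditions $v(0)=\dot v(0)=0$ inherited by $\mathcal{X}$, which forces the terminal conditions $p(T)=\dot p(T)=0$. The symmetry $\partial_{Y_{ij}}\partial_{Y_{kl}}C_K = \partial_{Y_{kl}}\partial_{Y_{ij}}C_K$, already exploited in the proof of Theorem \ref{gateaux-ex}, makes the spatial operator formally self-adjoint and reproduces the PDE (\ref{backward-1}); one of the two spatial surface integrals vanishes because $v|_{\partial\Omega}=0$, while the other is eliminated precisely by the natural Neumann-type condition (\ref{backward-3}). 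Existence and uniqueness of this weak backward solution follow by applying Theorem \ref{lions} after the time reversal $t\mapsto T-t$.

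For the second task, the linearity $C_h=\sum_{K=1}^{N} h_K C_K$ together with the Gauss--Ostrogradski theorem gives
\[\langle h, L^{*}p\rangle_{\RR^{N}} \;=\; \langle Lh, p\rangle_{L^{2}(0,T;H)} \;=\; -\sum_{K=1}^{N} h_K \int_{0}^{T}\!\!\int_{\Omega}\langle\langle \nabla_Y C_K(x,Ju), Jp\rangle\rangle\,dx\,dt,\]
and reading off the $K$-th coordinate produces the claimed formula for $\mathcal{T}'(\alpha)^{*}w$.

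The main obstacle I anticipate is the careful bookkeeping of spatial surface terms. In the first task, condition (\ref{backward-3}) must be parsed as $\sum_{k,l}p_k(\partial_{Y_{kl}}\partial_{Y_{ij}}C_\alpha)(\xi,Ju)\nu_l = 0$ for all $i,j$ on $[0,T]\times\partial\Omega$, so that, combined with the tensor symmetry above, it exactly annihilates $\int_{\partial\Omega}((\nabla_Y\nabla_Y C_\alpha(\xi,Ju):Jv)\nu)\cdot p\,d\sigma$ even though $Jv$ need not vanish on $\partial\Omega$. In the second task, the residual surface integral $\int_{\partial\Omega}(\nabla_Y C_K(\xi,Ju)\nu)\cdot p\,d\sigma$ must likewise be shown to disappear, most naturally by interpreting the pairing $\langle Lh,p\rangle$ via the distributional action of $\nabla\cdot(\nabla_Y C_K)$ on $p$ consistent with the weak formulation of the sensitivity equation. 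A secondary but essential check is the regularity $p\in L^{2}(0,T;U)$, required for $Jp$ in the final formula to be a bona fide $L^{2}$ object; this again follows from the Lions framework recalled in Section 2.
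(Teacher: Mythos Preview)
Your proposal is correct and follows essentially the same route as the paper: factor $\mathcal{T}'(\alpha)=B^{-1}\circ L$, identify $(B^{-1})^{*}w$ with the backward solution via integration by parts in time and space using the symmetry of $\nabla_Y\nabla_Y C_\alpha$, and compute $L^{*}$ by Gauss--Ostrogradski. If anything, you are more explicit than the paper about the two spatial surface terms and about invoking Lions' theorem under time reversal; the paper records the boundary integral $\int_{\partial\Omega}(\nabla_Y C_K\,\nu)\cdot p$ in an intermediate formula and then drops it in the final representation without further comment, which is precisely the point you flag as needing care.
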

\begin{proof}
 Let $w\in \mathcal{X}'$. Using the Gau{\ss}-Ostrogradski theorem, we get 
 \begin{eqnarray*}
  \langle \mathcal{T}'(\alpha)h,w\rangle_{\mathcal{X}\times \mathcal{X}'}
  &=& \int\limits_{0}^{T}\int\limits_{\Omega}\langle v(t,x),w(t,x) \rangle_{\mathbb{R}^{3}}\,dx\,dt \\
  &=& \int\limits_{0}^{T}\int\limits_{\Omega}\langle B^{-1}(\nabla\cdot[\nabla_{Y}C_{h}(x,Ju(t,x))]),w(t,x) \rangle_{\mathbb{R}^{3}} \,dx\,dt \\
  &=& \int\limits_{0}^{T}\int\limits_{\Omega}\langle \nabla\cdot[\nabla_{Y}C_{h}(x,Ju(t,x))],(B^{-1})^{*}w(t,x) \rangle_{\mathbb{R}^{3}}\,dx\,dt \\
  &=& \int\limits_{0}^{T}\int\limits_{\partial\Omega}\nabla_{Y}C_{h}(\xi,Ju(t,\xi))(B^{-1})^{*}w(t,\xi)\cdot\nu(\xi)\,d\xi dt \\
  &&\;\;	-\int\limits_{0}^{T}\int\limits_{\Omega}\langle\langle \nabla_{Y}C_{h}(x,Ju(t,x)),J(B^{-1})^{*}w(t,x) \rangle\rangle \,dx\,dt \\
  &=& \bigg\langle h, \bigg[\int\limits_{0}^{T}\bigg(\int\limits_{\partial\Omega}\nabla_{Y}C_{K}(\xi,Ju(t,\xi))(B^{-1})^{*}w(t,\xi)\cdot\nu(\xi)\,d\xi \\
  &&\;\;        - \int\limits_{\Omega}\langle\langle \nabla_{Y}C_{K}(x,Ju(t,x)),J(B^{-1})^{*}w(t,x) \rangle\rangle dx\bigg)dt\bigg]_{K=1,...,N}\bigg\rangle_{\mathbb{R}^{N}}
 \end{eqnarray*}
 and hence
 \begin{eqnarray}\label{T'1}
  \mathcal{T}'(\alpha)^{*}w &=& \bigg[\int\limits_{0}^{T}\bigg(\int\limits_{\partial\Omega}\nabla_{Y}C_{K}(\xi,Ju(t,\xi))(B^{-1})^{*}w(t,\xi)\cdot\nu(\xi)d\xi \\[1ex]
  &&  - \int\limits_{\Omega}\langle\langle \nabla_{Y}C_{K}(x,Ju(t,x)),J(B^{-1})^{*}w(t,x) \rangle\rangle dx\bigg)dt\bigg]_{K=1,...,N}. \nonumber
 \end{eqnarray}
 Let for $w\in \mathcal{X}'$ and $p\in L^{2}(0,T;H)$ the adjoint $(B^{-1})^{*}:\mathcal{X}'\rightarrow L^{2}(0,T;H)$ be given by $w\mapsto p=(B^{-1})^{*}w$. 
 Let furthermore $v\in \mathcal{X}$ be a solution of (\ref{gateaux-diff}) with (\ref{vAnfangswert}) and (\ref{vRandwert}). Then we have 
 \[
 \langle v,w\rangle_{\mathcal{X}\times \mathcal{X}'}
 =\langle B^{-1}(f_{1}),w\rangle_{\mathcal{X}\times \mathcal{X}'}
 =\langle f_{1},(B^{-1})^{*}w\rangle_{L^{2}(0,T;H)}
 =\langle Bv,p\rangle_{L^{2}(0,T;H)}.
 \]  
 With the definition of $B$, a further application of the Gau{\ss}-Ostrogradski theorem and the symmetry of $\nabla_{Y}\nabla_{Y}C_{\alpha}(x,Ju)$ we can reformulate this equation as 
 \begin{eqnarray*}
 \langle v,w\rangle_{\mathcal{X}\times \mathcal{X}'} &=&\langle \rho\ddot{v}-\nabla\cdot[\nabla_{Y}\nabla_{Y}C_{\alpha}(x,Ju):Jv],p\rangle_{L^{2}(0,T;H)}\\[1ex]
 &=& \langle v,\rho\ddot{p}-\nabla\cdot[\nabla_{Y}\nabla_{Y}C_{\alpha}(x,Ju):Jp]\rangle_{L^{2}(0,T;H)}\\[1ex]
 && + \Big[\int\limits_{\Omega}\langle v(t,x),\rho\dot{p}(t,x)\rangle dx\Big]_{t=0}^{T} - \Big[\int\limits_{\Omega}\langle \dot{v}(t,x),\rho p(t,x)\rangle dx\Big]_{t=0}^{T}\\
 && - \int\limits_{0}^{T}\int\limits_{\partial\Omega} \Big( p(t,\xi )\cdot \big( \nabla_{Y}\nabla_{Y}C_{\alpha} ( \xi,Ju ( t,\xi ) ):Jv(t,\xi ) \big) \Big) \cdot \nu(\xi)\,d\xi\, dt\\
 && + \int\limits_{0}^{T}\int\limits_{\partial\Omega} \Big( v(t,\xi )\cdot \big( \nabla_{Y}\nabla_{Y}C_{\alpha} ( \xi,Ju ( t,\xi ) ):Jp ( t,\xi ) \big) \Big) \cdot\nu(\xi)\,d\xi\, dt\\
 \end{eqnarray*}
 and with (\ref{vAnfangswert}) and (\ref{vRandwert}) for $v$ it follows
 \begin{eqnarray*}
 \langle v,w\rangle_{\mathcal{X}\times \mathcal{X}'} &=&\langle v,\rho\ddot{p}-\nabla\cdot[\nabla_{Y}\nabla_{Y}C_{\alpha}(x,Ju):Jp]\rangle_{L^{2}(0,T;H)}\\[1ex]
 &&+ \int\limits_{\Omega}\langle v(T,x),\rho\dot{p}(T,x)\rangle dx - \int\limits_{\Omega}\langle \dot{v}(T,x),\rho p(T,x)\rangle dx\\
 &&- \int\limits_{0}^{T}\int\limits_{\partial\Omega} \Big(\big(p(t,\xi)\cdot \nabla_{Y}\nabla_{Y}C_{\alpha}(\xi,Ju(t,\xi))\big)\cdot \nu (\xi)\Big) : Jv(t,\xi)\,d\xi\, dt.\\
 \end{eqnarray*}
 Assuming
 \begin{equation}
 p(T,x)=\dot{p}(T,x)=0\qquad x\in\Omega
 \end{equation}
 and boundary conditions
 \begin{equation}
 \big(p(t,\xi)\cdot \nabla_{Y}\nabla_{Y}C_{\alpha}(\xi,Ju(t,\xi))\big)\cdot \nu (\xi)=0\qquad (t,\xi)\in[0,T]\times\partial\Omega ,
 \end{equation}
 we finally obtain
 \[
 \langle v,w\rangle_{\mathcal{X}\times \mathcal{X}'}
 =\langle v,\rho\ddot{p}-\nabla\cdot[\nabla_{Y}\nabla_{Y}C_{\alpha}(x,Ju):Jp]\rangle_{L^{2}(0,T;H)}.
\]
Thus, $p=(B^{-1})^{*}w$ is the weak solution of the hyperbolic, backward IBVP (\ref{backward-1})--(\ref{backward-3}). We get the final representation of the adjoint $\mathcal{T}' (\alpha)^*$ as
\begin{equation*}
  \mathcal{T}'(\alpha)^{*}w = \Big[-\int\limits_{0}^{T}\int\limits_{\Omega}\langle\langle\nabla_{Y}C_{K}(x,Ju(t,x)),J(B^{-1})^{*}w(t,x)\rangle\rangle \,dx\,dt\Big]_{K=1,...,N}.
\end{equation*}
\end{proof}

\section{Conclusions}

In this article we have proven the Fr\'{e}chet differentiability of an operator, which maps the stored energy function $C=C(x,Y)$ to
the solution of a nonlinear initial-boundary value problem, that describes the dynamic behavior of hyperelastic
materials. The Fr\'{e}chet derivative has been characterized as unique (weak) solution of a linear, hyperbolic IBVP,
where we assumed, that $C$ can be written as conic combination with respect to given functions $C_K=C_K (x,Y)$
of a dictionary. An advantage of this approach is, that the $C_K$ can be determined as physically meaningful candidates for $C$, e.g. by
functions, which are polyconvex with respect to $Y$. We furthermore deduced a representation of the adjoint operator of the Fr\'{e}chet derivative which
is characterized by a linear, hyperbolic, backward IBVB. Both representations, the Fr\'{e}chet derivative of the parameter-to-solution map as well as 
its adjoint can be used for the solution of inverse problems connected to dynamic hyperelasticity, since it represents a linearization of the nonlinear
and ill-posed problem of identifying $C$ from measurements of the displacement field $u$ or part of it. Moreover the results of the article are important
for the implementation of any numerical solver of this or related problems, since those often use the concept of the Fr\'{e}chet derivative of the forward operator.
In that sense the article might have an impact to build Structural Health Monitoring systems for hyperelastic materials.


\begin{appendix}
\section{Proof of Theorem \ref{thm-glm-konv}}

\begin{proof}(of Theorem \ref{thm-glm-konv})
For proving assertion (\ref{glm-konv-ungl}) we have to adapt Theorem \ref{theorem21imp} to our problem. 
Let $\alpha\in \mathrm{int} \Drm (\mathcal{T})$. Then a (unique) solution $u(\alpha+h)$ of the IBVP (\ref{cauchy-hyper-comb}), (\ref{anfangswert1})--(\ref{randwert}) exists for $\|h\|_\infty$ sufficiently small. Obviously $\tilde{u}:=u(\alpha+h)-u(\alpha)$ solves equation
\begin{eqnarray}\label{utilde}
 &&\rho\ddot{\tilde{u}}(t,x) - \sum_{K=1}^{N}\alpha_{K}\nabla\cdot[\nabla_{Y}C_{K}(x,Ju(\alpha+h))-\nabla_{Y}C_{K}(x,Ju(\alpha))] \nonumber \\ 
 &&= \nabla\cdot [\nabla_{Y}C_{h}(x,Ju(\alpha+h))] 
\end{eqnarray}
for $t\in[0,T]$ and $x\in\Omega$ with initial conditions
\begin{equation}\label{utildeAnfangswert}
  \tilde{u}(0,x)=\dot{\tilde{u}}(0,x)=0\qquad \mbox{for } x\in\Omega
\end{equation}
and vanishing boundary values 
\begin{equation}\label{utildeRandwert}
 \tilde{u}(t,x)=0\qquad \mbox{for all } x\in\partial\Omega\,.
\end{equation}
Theorem \ref{theorem21imp} tells then that
\begin{eqnarray}\label{theorem21spez}
 &&\bigg[\rho\|\dot{\tilde{u}}(t,\cdot)\|_{L^{2}(\Omega,\RR^3)}^{2} + \kappa(\alpha)\|J\tilde{u}(t,\cdot)\|_{L^{2}(\Omega,\RR^{3\times 3})}^{2} + \rho\|\ddot{\tilde{u}}(t,\cdot)\|_{L^{2}(\Omega,\RR^3)}^{2} \nonumber \\
 &&+ \kappa(\alpha)\|J\dot{\tilde{u}}(t,\cdot)\|_{L^{2}(\Omega,\RR^{3\times 3})}^{2} + \|\tilde{u}(t,\cdot)\|_{H^{2}(\Omega,\RR^3)}^{2}\bigg]^{\frac{1}{2}} \nonumber \\
 &&\leq \bar{C}_{2}\|h\|_{\infty}.
\end{eqnarray}
Define $d:=u(\alpha+h)-u(\alpha)-v$. Then $d$ solves due to (\ref{gateaux-diff}) and (\ref{utilde})
  \begin{eqnarray*}\scriptscriptstyle
  &&\rho\ddot{d}(t,\cdot) - \sum_{K=1}^{N}\alpha_{K}\nabla\cdot[\nabla_{Y}\nabla_{Y}C_{K}(x,Ju(\alpha)):Jd(t,\cdot)]\\
  &&= \sum_{K=1}^{N}\alpha_{K}\nabla\cdot\Big[\nabla_{Y}C_{K}(x,Ju(\alpha+h))-\nabla_{Y}C_{K}(x,Ju(\alpha))\\
  &&\hspace*{2cm}-\nabla_{Y}\nabla_{Y}C_{K}(x,Ju(\alpha)):J\tilde{u}(t,\cdot)\Big]\\
  &&\hspace*{1cm}+ \sum_{K=1}^{N}h_{K}\nabla\cdot[\nabla_{Y}C_{K}(x,Ju(\alpha+h))-\nabla_{Y}C_{K}(x,Ju(\alpha))].
 \end{eqnarray*}
 Multiplying this by $2\dot{d}$ and integrating over $\Omega$ yields by means of the Gau{\ss}-Ostrogradski theorem with (\ref{vRandwert}) as well as (\ref{utildeRandwert}) and 
 $Y_{p}:=pJu(\alpha+h)+(1-p)Ju(\alpha)$ for $p\in[0,1]$
 \begin{eqnarray}\label{A-and-B-term}
 &&2\langle\rho\ddot{d}(t,\cdot),\dot{d}(t,\cdot)\rangle_{L^{2}(\Omega,\RR^3)}
 +2\underbrace{\sum_{K=1}^{N}\alpha_{K}\int\limits_{\Omega}\langle\langle\nabla_{Y}\nabla_{Y}C_{K}(x,Ju(\alpha)):Jd(t,\cdot),J\dot{d}(t,\cdot)\rangle\rangle dx}_{a_{1}(t;d,\dot{d})}\nonumber \\
 &&= -2\underbrace{\sum_{K=1}^{N}\alpha_{K}\int\limits_{\Omega}\int\limits_{0}^{1}\int\limits_{0}^{1}s\langle\langle[\nabla_{Y}\nabla_{Y}C_{K}(x,Y_{rs}):J\tilde{u}(t,\cdot)]:J\tilde{u}(t,\cdot),J\dot{d}(t,\cdot)\rangle\rangle drdsdx}_{(A)}\nonumber \\
 &&\;\;\; -2\underbrace{\sum_{K=1}^{N}h_{K}\int\limits_{\Omega}\int\limits_{0}^{1}\langle\langle\nabla_{Y}\nabla_{Y}C_{K}(x,Y_{s}):J\tilde{u}(t,\cdot),J\dot{d}(t,\cdot)\rangle\rangle dsdx}_{(B)}.
 \end{eqnarray}
 Compared to (\ref{vschwach}) the differential equation for $d$ differs only with respect to the right side. For this reason it follows (cf. proof of Theorem \ref{gateaux-stetig}) for $\tau\in[0,T]$ 
 \begin{equation}\label{gleichung-d}
 a_{1}(t;d,d) + \rho\|\dot{d}(\tau,\cdot)\|_{L^{2}(\Omega,\RR^3)}^{2} = \int\limits_{0}^{\tau}a'_{1}(t;d,d)dt + 2\int\limits_{0}^{\tau}[-(A)-(B)]dt.
\end{equation}
Inserting (\ref{a1}) and (\ref{a1'}) into this equation yields
\begin{eqnarray}\label{ungleichung-d}
 &&\sum_{K=1}^{N}\alpha_{K}\kappa_{K}^{[1]}\|Jd(\tau,\cdot)\|_{L^{2}(\Omega,\RR^{3\times3})}^{2} + \rho\|\dot{d}(\tau,\cdot)\|_{L^{2}(\Omega,\RR^3)}^{2} \nonumber \\
 &&\leq  \int\limits_{0}^{\tau}\frac{729}{8}\eta\mu(\alpha)M_{1}\|Jd(t,\cdot)\|_{L^{2}(\Omega,\RR^{3\times3})}^{2}dt + 2\int\limits_{0}^{\tau}[-(A)-(B)]dt.
\end{eqnarray}
In the following we drop the argument $(t,\cdot)$ for the sake of a better readability and use $\alpha_{K}> 0$ for all $K=1,...,N$.\\
Applying (\ref{beschr1}), (\ref{beschr2}) and (\ref{bedA}) we deduce
\begin{eqnarray*}\scriptscriptstyle
 &&\Big|\int\limits_{0}^{\tau}(A)dt\Big|\\
 &&= \Big|\sum_{K=1}^{N}\alpha_{K}\int\limits_{0}^{\tau}\int\limits_{\Omega}\int\limits_{0}^{1}\int\limits_{0}^{1}s\langle\langle\nabla_{Y}\nabla_{Y}C_{K}(x,Y_{rs}):J\tilde{u}:J\tilde{u},J\dot{d}\rangle\rangle \, dr\,ds\,dx\,dt\Big|\\
 &&\leq \Big|\sum_{K=1}^{N}\alpha_{K}\int\limits_{0}^{\tau}\partial_{t} \Big[\int\limits_{\Omega}\int\limits_{0}^{1}\int\limits_{0}^{1}s\sum_{\substack{p,q,\\k,l,\\i,j=1}}^{3}\partial_{Y_{pq}}\partial_{Y_{ij}}\partial_{Y_{kl}}C_{K}(x,Y_{rs})\partial_{q}\tilde{u}_{p}\partial_{l}\tilde{u}_{k}\partial_{j}d_{i}\, dr\, ds\, dx \Big]\, dt\Big|\\
 &&\hspace*{1cm}+\sum_{K=1}^{N}\alpha_{K}\int\limits_{0}^{\tau}\int\limits_{\Omega}\int\limits_{0}^{1}\int\limits_{0}^{1}s\sum_{\substack{a,b,\\p,q,\\k,l,\\i,j=1}}^{3}|\partial_{Y_{ab}}\partial_{Y_{pq}}\partial_{Y_{ij}}\partial_{Y_{kl}}C_{K}(x,Y_{rs})|\times\\
 &&\hspace*{2cm}\times|rs\partial_{b}\dot{u}_{a}(\alpha+h)+(1-rs)\partial_{b}\dot{u}_{a}(\alpha)||\partial_{q}\tilde{u}_{p}||\partial_{l}\tilde{u}_{k}||\partial_{j}d_{i}|\, dr\, ds\, dx\, dt\\
 &&\hspace*{1cm}+\sum_{K=1}^{N}\alpha_{K}\int\limits_{0}^{\tau}\int\limits_{\Omega}\int\limits_{0}^{1}\int\limits_{0}^{1}s\sum_{\substack{p,q,\\k,l,\\i,j=1}}^{3}|\partial_{Y_{pq}}\partial_{Y_{ij}}\partial_{Y_{kl}}C_{K}(x,Y_{rs})||\partial_{q}\dot{\tilde{u}}_{p}||\partial_{l}\tilde{u}_{k}||\partial_{j}d_{i}|\, dr\, ds\, dx\, dt\\
 &&\hspace*{1cm}+\sum_{K=1}^{N}\alpha_{K}\int\limits_{0}^{\tau}\int\limits_{\Omega}\int\limits_{0}^{1}\int\limits_{0}^{1}s\sum_{\substack{p,q,\\k,l,\\i,j=1}}^{3}|\partial_{Y_{pq}}\partial_{Y_{ij}}\partial_{Y_{kl}}C_{K}(x,Y_{rs})||\partial_{q}\tilde{u}_{p}||\partial_{l}\dot{\tilde{u}}_{k}||\partial_{j}d_{i}|\, dr\, ds\, dx\, dt.\\
 \end{eqnarray*}
Taking the initial conditions for $\tilde{u}$ into account gives 
 \begin{eqnarray*}\scriptscriptstyle
 &&\Big|\int\limits_{0}^{\tau}(A)dt\Big|\\
  &&\leq \sum_{K=1}^{N}\alpha_{K}\int\limits_{\Omega}\int\limits_{0}^{1}\int\limits_{0}^{1}s\sum_{\substack{p,q,\\k,l,\\i,j=1}}^{3}|\partial_{Y_{pq}}\partial_{Y_{ij}}\partial_{Y_{kl}}C_{K}(x,Y_{rs})||\partial_{q}\tilde{u}_{p}(\tau)||\partial_{l}\tilde{u}_{k}(\tau)||\partial_{j}d_{i}(\tau)|\, dr\, ds\, dx\\
  &&\hspace*{1cm}+\sum_{K=1}^{N}\alpha_{K}\mu_{K}^{[3]}M_{1}\int\limits_{0}^{\tau}\int\limits_{\Omega}\int\limits_{0}^{1}\int\limits_{0}^{1}s\sum_{\substack{a,b,\\p,q,\\k,l,\\i,j=1}}^{3}|\partial_{q}\tilde{u}_{p}||\partial_{l}\tilde{u}_{k}||\partial_{j}d_{i}|\, dr\, ds\, dx\, dt\\
  &&\hspace*{1cm}+2\sum_{K=1}^{N}\alpha_{K}\mu_{K}^{[2]}\int\limits_{0}^{\tau}\int\limits_{\Omega}\int\limits_{0}^{1}\int\limits_{0}^{1}s\sum_{\substack{p,q,\\k,l,\\i,j=1}}^{3}|\partial_{q}\tilde{u}_{p}||\partial_{l}\dot{\tilde{u}}_{k}||\partial_{j}d_{i}|\, dr\, ds\, dx\, dt\\
  &&\leq \frac{1}{2} \sum_{K=1}^{N}\alpha_{K}\mu_{K}^{[2]}\int\limits_{\Omega}\bigg(\sum_{k,l=1}^{3}|\partial_{l}\tilde{u}_{k}(\tau)|\bigg)^{2}\sum_{i,j=1}^{3}|\partial_{j}d_{i}(\tau)|\, dx\\
  &&\hspace*{1cm}+\frac{9}{2}M_{1}\sum_{K=1}^{N}\alpha_{K}\mu_{K}^{[3]}\int\limits_{0}^{\tau}\int\limits_{\Omega}\bigg(\sum_{k,l=1}^{3}|\partial_{l}\tilde{u}_{k}|\bigg)^{2}\sum_{i,j=1}^{3}|\partial_{j}d_{i}|\, dx\, dt\\
  &&\hspace*{1cm}+\sum_{K=1}^{N}\alpha_{K}\mu_{K}^{[2]}\int\limits_{0}^{\tau}\int\limits_{\Omega}\bigg(\sum_{p,q=1}^{3}|\partial_{q}\tilde{u}_{p}|\bigg)\bigg(\sum_{k,l=1}^{3}|\partial_{l}\dot{\tilde{u}}_{k}|\bigg)\bigg(\sum_{i,j=1}^{3}|\partial_{j}d_{i}|\bigg)dx\, dt.\\
 \end{eqnarray*}
 Multiple applications of H\"{o}lder's inequality yield
 \begin{eqnarray*}\scriptscriptstyle
 &&\Big|\int\limits_{0}^{\tau}(A)dt\Big|\\
  &&\leq \frac{1}{2} \sum_{K=1}^{N}\alpha_{K}\mu_{K}^{[2]}\bigg(\int\limits_{\Omega}\Big(\sum_{k,l=1}^{3}|\partial_{l}\tilde{u}_{k}(\tau)|\Big)^{4}dx\bigg)^{\frac{1}{2}}\bigg(\int\limits_{\Omega}\Big(\sum_{i,j=1}^{3}|\partial_{j}d_{i}(\tau)|\Big)^{2}dx\bigg)^{\frac{1}{2}}\\
  &&\hspace*{1cm}+\frac{9}{2}M_{1}\sum_{K=1}^{N}\alpha_{K}\mu_{K}^{[3]}\int\limits_{0}^{\tau}\bigg(\int\limits_{\Omega}\Big(\sum_{k,l=1}^{3}|\partial_{l}\tilde{u}_{k}|\Big)^{4}dx\bigg)^{\frac{1}{2}}\bigg(\int\limits_{\Omega}\Big(\sum_{i,j=1}^{3}|\partial_{j}d_{i}|\Big)^{2}dx\bigg)^{\frac{1}{2}}dt\\
  &&+\sum_{K=1}^{N}\alpha_{K}\mu_{K}^{[2]}\int\limits_{0}^{\tau}\bigg(\int\limits_{\Omega}\Big(\sum_{p,q=1}^{3}|\partial_{q}\tilde{u}_{p}|\Big)^{4}dx\bigg)^{\frac{1}{4}}\bigg(\int\limits_{\Omega}\Big(\sum_{k,l=1}^{3}|\partial_{l}\dot{\tilde{u}}_{k}|\Big)^{4}dx\bigg)^{\frac{1}{4}}
  \bigg(\int\limits_{\Omega}\Big(\sum_{i,j=1}^{3}|\partial_{j}d_{i}|\Big)^{2}dx\bigg)^{\frac{1}{2}}dt\\
  &&\leq \frac{1}{2} \sum_{K=1}^{N}\alpha_{K}\mu_{K}^{[2]}\bigg(\int\limits_{\Omega}\Big(\sum_{k,l=1}^{3}|\partial_{l}\tilde{u}_{k}(\tau)|\Big)^{4}dx\bigg)^{\frac{1}{2}}\bigg(9\int\limits_{\Omega}\sum_{i,j=1}^{3}|\partial_{j}d_{i}(\tau)|^{2}dx\bigg)^{\frac{1}{2}}\\
  &&\hspace*{1cm}+\frac{9}{2}M_{1}\sum_{K=1}^{N}\alpha_{K}\mu_{K}^{[3]}\int\limits_{0}^{\tau}\bigg(\int\limits_{\Omega}\Big(\sum_{k,l=1}^{3}|\partial_{l}\tilde{u}_{k}|\Big)^{4}dx\bigg)^{\frac{1}{2}}\bigg(9\int\limits_{\Omega}\sum_{i,j=1}^{3}|\partial_{j}d_{i}|^{2}dx\bigg)^{\frac{1}{2}}dt\\
  &&\hspace*{5mm}+\sum_{K=1}^{N}\alpha_{K}\mu_{K}^{[2]}\int\limits_{0}^{\tau}\bigg(\int\limits_{\Omega}\Big(\sum_{p,q=1}^{3}|\partial_{q}\tilde{u}_{p}|\Big)^{4}dx\bigg)^{\frac{1}{4}}\bigg(9\int\limits_{\Omega}\sum_{k,l=1}^{3}|\partial_{l}\dot{\tilde{u}}_{k}|^{4}dx\bigg)^{\frac{1}{4}}
  \bigg(9\int\limits_{\Omega}\sum_{i,j=1}^{3}|\partial_{j}d_{i}|^{2}dx\bigg)^{\frac{1}{2}}dt.\\
 \end{eqnarray*}
 From (\ref{ungl-h2}) we further deduce
 \begin{eqnarray*}\scriptscriptstyle
  &&\Big|\int\limits_{0}^{\tau}(A)dt\Big|\\
  &&\leq \frac{81}{2}(1+C_{\Omega})C_{SE}^{2}\sum_{K=1}^{N}\alpha_{K}\mu_{K}^{[2]}\|\tilde{u}(\tau,\cdot)\|_{H^{2}(\Omega,\RR^3)}^{2} \|d(\tau,\cdot)\|_{H_{0}^{1}(\Omega,\RR^3)}\\
  &&\hspace*{5mm}+\frac{729}{2}(1+C_{\Omega})C_{SE}^{2}M_{1}\sum_{K=1}^{N}\alpha_{K}\mu_{K}^{[3]}\int\limits_{0}^{\tau}\|\tilde{u}(t,\cdot)\|_{H^{2}(\Omega,\RR^3)}^{2}\|d(t,\cdot)\|_{H_{0}^{1}(\Omega,\RR^3)}dt\\
  &&\hspace*{5mm}+81\sqrt{1+C_{\Omega}}C_{SE}\sum_{K=1}^{N}\alpha_{K}\mu_{K}^{[2]}\int\limits_{0}^{\tau}\|\tilde{u}(t,\cdot)\|_{H^{2}(\Omega,\RR^3)}\|d(t,\cdot)\|_{H_{0}^{1}(\Omega,\RR^3)}
 \bigg(\int\limits_{\Omega}\sum_{k,l=1}^{3}|\partial_{l}\dot{\tilde{u}}_{k}|^{4}dx\bigg)^{\frac{1}{4}}dt,\\
 \end{eqnarray*}
 where we used the estimate
 \begin{equation}\label{d-estimate} \|d(\tau,\cdot)\|_{H_{0}^{1}(\Omega,\RR^3)}  \leq \|\tilde{u}(\tau,\cdot)\|_{H^{2}(\Omega,\RR^3)}+\|v(\tau,\cdot)\|_{H_{0}^{1}(\Omega,\RR^3)}. \end{equation}
 Using (\ref{ungl4-2}) we can further estimate this by
 \begin{eqnarray*}\scriptscriptstyle
  &&\Big|\int\limits_{0}^{\tau}(A)dt\Big|\\
  &&\leq \frac{81}{2}(1+C_{\Omega})C_{SE}^{2}\sum_{K=1}^{N}\alpha_{K}\mu_{K}^{[2]}\|\tilde{u}(\tau,\cdot)\|_{H^{2}(\Omega,\RR^3)}^{2}\bigg(\|\tilde{u}(\tau,\cdot)\|_{H^{2}(\Omega,\RR^3)}+\|v(\tau,\cdot)\|_{U}\bigg)\\
  &&+\frac{729}{2}(1+C_{\Omega})C_{SE}^{2}M_{1}\sum_{K=1}^{N}\alpha_{K}\mu_{K}^{[3]}\int\limits_{0}^{\tau}\|\tilde{u}(t,\cdot)\|_{H^{2}(\Omega,\RR^3)}^{2}\|d(t,\cdot)\|_{H_{0}^{1}(\Omega,\RR^3)}dt\\
  &&+81\sqrt{2M_{1}(1+C_{\Omega})}C_{SE}\sum_{K=1}^{N}\alpha_{K}\mu_{K}^{[2]}\int\limits_{0}^{\tau}\|\tilde{u}(t,\cdot)\|_{H^{2}(\Omega,\RR^3)}\|d(t,\cdot)\|_{H_{0}^{1}(\Omega,\RR^3)}
  \|J\dot{\tilde{u}}(t,\cdot)\|_{L^{2}(\Omega,\RR^{3\times 3})}^{\frac{1}{2}}dt\\
 \end{eqnarray*}
 and with (\ref{vUungl}) as well as (\ref{theorem21spez}) we finally obtain
 \begin{eqnarray*}\scriptscriptstyle
 &&\Big|\int\limits_{0}^{\tau}(A)dt\Big|\\
  &&\leq \frac{81}{2}(1+C_{\Omega})C_{SE}^{2}\sum_{K=1}^{N}\alpha_{K}\mu_{K}^{[2]}\bar{C}_{2}^2 \|h\|_{\infty}^{2}(\bar{C}_{2}+\bar{L}_{1}S(T))\|h\|_{\infty}\\
  &&\hspace*{3mm}+\frac{729}{2}(1+C_{\Omega})C_{SE}^{2}M_{1}\bar{C}_{2}^{2}\sum_{K=1}^{N}\alpha_{K}\mu_{K}^{[3]}\|h\|_{\infty}^{2}\int\limits_{0}^{\tau}\|d(t,\cdot)\|_{H_{0}^{1}(\Omega,\RR^3)}dt\\
  &&\hspace*{3mm}+81\sqrt{\frac{2M_{1}(1+C_{\Omega})}{\kappa(\alpha)^\frac{1}{2}}}C_{SE}\bar{C}_{2}^{\frac{3}{2}}\sum_{K=1}^{N}\alpha_{K}\mu_{K}^{[2]}\|h\|_{\infty}^{\frac{3}{2}}\int\limits_{0}^{\tau}\|d(t,\cdot)\|_{H_{0}^{1}(\Omega,\RR^3)}dt\\
  &&=\tilde{B}_{1}\|h\|_{\infty}^{3}+\tilde{B}_{2}(h)\|h\|_{\infty}^{\frac{3}{2}}\int\limits_{0}^{\tau}\|d(t,\cdot)\|_{H_{0}^{1}(\Omega,\RR^3)}dt,\\
 \end{eqnarray*}
  where we set
  \begin{equation*}
   \tilde{B}_{1}:=\frac{81}{2}(1+C_{\Omega})C_{SE}^{2}\bar{C}_{2}^2 (\bar{C}_{2}+\bar{L}_{1}S(T))\sum_{K=1}^{N}\alpha_{K}\mu_{K}^{[2]}
  \end{equation*}
  and
  \begin{eqnarray*}
  \tilde{B}_{2}(h) &:=&  81\sqrt{\frac{2M_{1}(1+C_{\Omega})}{\kappa(\alpha)^\frac{1}{2}}}C_{SE}\bar{C}_{2}^{\frac{3}{2}}\sum_{K=1}^{N}\alpha_{K}\mu_{K}^{[2]}\\
  && +\frac{729}{2}(1+C_{\Omega})C_{SE}^{2}M_{1}\bar{C}_{2}^{2}\sum_{K=1}^{N}\alpha_{K}\mu_{K}^{[3]}\|h\|_{\infty}^{\frac{1}{2}}. 
  \end{eqnarray*}
  Here, $S(T)$ is given by (\ref{S-tau}).
  Recalling the results from Section \ref{secVor} and Theorem \ref{theorem21imp} we may estimate
  \begin{equation}\label{B1}
   \tilde{B}_{1}\leq B_{1}:=\frac{729}{16}(1+C_{\Omega})C_{SE}^{2}\bar{C}_{2}^2 (\bar{C}_{2}+\bar{L}_{1}S(T))\eta\mu(\alpha)
  \end{equation}
  respectively
  \begin{eqnarray}\label{B2}
  \tilde{B}_{2}(h)\leq B_{2}(h)&:=&\frac{729}{8}\sqrt{\frac{2M_{1}(1+C_{\Omega})}{\kappa(\alpha)^\frac{1}{2}}}C_{SE}\bar{C}_{2}^{\frac{3}{2}}\eta\mu(\alpha) \nonumber \\
  &&+\frac{729}{2}(1+C_{\Omega})C_{SE}^{2}M_{1}\bar{C}_{2}^{2}\sum_{K=1}^{N}\alpha_{K}\mu_{K}^{[3]}\|h\|_{\infty}^{\frac{1}{2}}. 
  \end{eqnarray}
  After all this we arrived at the estimate
  \begin{equation}\label{int-(A)}
   \Big|\int\limits_{0}^{\tau}(A)dt\Big| \leq B_{1}\|h\|_{\infty}^{3}+B_{2}(h)\|h\|_{\infty}^{\frac{3}{2}}\int\limits_{0}^{\tau}\|d(t,\cdot)\|_{H_{0}^{1}(\Omega,\RR^3)}dt
  \end{equation}
  for the $(A)$-term. Please note that for fixed $\alpha$ we have the convergence
  \begin{equation}\label{B-h-zero}
  \lim_{\|h\|_\infty\to 0} B_2 (h) = \frac{729}{8}\sqrt{\frac{2M_{1}(1+C_{\Omega})}{\kappa(\alpha)^\frac{1}{2}}}C_{SE}\bar{C}_{2}^{\frac{3}{2}}\eta\mu(\alpha)=: \bar{B}_2
  \end{equation}
  with a constant $\bar{B}_2$ independent of $h$.\\
  Our investigations are now devoted to the $(B)$-term from (\ref{A-and-B-term}). We compute
  \begin{eqnarray*}
  \Big|\int\limits_{0}^{\tau}(B)dt \Big|
  &=&\Big|\int\limits_{0}^{\tau}\sum_{K=1}^{N}h_{K}\int\limits_{\Omega}\int\limits_{0}^{1}\langle\langle\nabla_{Y}\nabla_{Y}C_{K}(x,Y_{s}):J\tilde{u},J\dot{d}\rangle\rangle ds\, dx\, dt \Big|\\
  &\leq & \Big|\int\limits_{0}^{\tau}\partial_{t}\Big[\sum_{K=1}^{N}h_{K}\sum_{i,j,k,l=1}^{N}\int\limits_{\Omega}\int\limits_{0}^{1}\partial_{Y_{ij}}\partial_{Y_{kl}}C_{K}(x,Y_{s})\partial_{j}\tilde{u}_{i}\partial_{l}d_{k} ds\, dx \Big]\,dt \Big|\\
  &&\hspace*{1cm}+\sum_{\substack{i,j,k,\\l,p,q=1}}^{3}\int\limits_{0}^{\tau}\int\limits_{\Omega}\int\limits_{0}^{1}\sum_{K=1}^{N}|h_K||\partial_{Y_{pq}}\partial_{Y_{ij}}\partial_{Y_{kl}}C_{K}(x,Y_{s})|\times\\[1ex]
  &&\hspace*{1.5cm}\times \big(s |\partial_{q}\dot{u}_{p}(\alpha+h)|+(1-s)|\partial_{q}\dot{u}_{p}(\alpha)| \big)|\partial_{j}\tilde{u}_{i}||\partial_{l}d_{k}|\, ds\, dx\, dt\\
  &&\hspace*{1cm}+\sum_{i,j,k,l=1}^{3}\int\limits_{0}^{\tau}\int\limits_{\Omega}\int\limits_{0}^{1}\sum_{K=1}^{N}|h_K||\partial_{Y_{ij}}\partial_{Y_{kl}}C_{K}(x,Y_{s})||\partial_{j}\dot{\tilde{u}}_{i}||\partial_{l}d_{k}|\, ds\, dx\, dt.\\
  \end{eqnarray*}
 Using (\ref{beschr2}), (\ref{bed2}), (\ref{bedA}), H\"{o}lder's inequality and the initial condition for $\tilde{u}$ we can this further estimate as
  \begin{eqnarray*}
  \Big|\int\limits_{0}^{\tau}(B)dt \Big|
  &\leq &\sum_{K=1}^{N}|h_K|\mu_{K}^{[1]}\int\limits_{\Omega}\sum_{i,j,k,l=1}^{3}|\partial_{j}\tilde{u}_{i}(\tau)||\partial_{l}d_{k}(\tau)|\,dx\\
  &&\hspace*{1cm}+ 9M_{1}\sum_{K=1}^{N}|h_K|\mu_{K}^{[2]}\int\limits_{0}^{\tau}\int\limits_{\Omega}\sum_{i,j,k,l=1}^{3}|\partial_{j}\tilde{u}_{i}||\partial_{l}d_{k}|\,dx\,dt\\
  &&\hspace*{1cm}+ \sum_{K=1}^{N}|h_K|\mu_{K}^{[1]}\int\limits_{0}^{\tau}\int\limits_{\Omega}\sum_{i,j,k,l=1}^{3}|\partial_{j}\dot{\tilde{u}}_{i}||\partial_{l}d_{k}|\,dx\,dt\\
  &&\leq 9\sum_{K=1}^{N}|h_K|\mu_{K}^{[1]}\|J\tilde{u}(\tau,\cdot)\|_{L^{2}(\Omega,\RR^{3\times 3})}\|d(\tau,\cdot)\|_{H_{0}^{1}(\Omega,\RR^3)}\\
  &&\hspace*{1cm}+ 81M_{1}\sum_{K=1}^{N}|h_K|\mu_{K}^{[2]}\int\limits_{0}^{\tau}\|J\tilde{u}(t,\cdot)\|_{L^{2}(\Omega,\RR^{3\times 3})}\|d(t,\cdot)\|_{H_{0}^{1}(\Omega,\RR^3)}dt\\
  &&\hspace*{1cm}+ 9\sum_{K=1}^{N}|h_K|\mu_{K}^{[1]}\int\limits_{0}^{\tau}\|J\dot{\tilde{u}}(t,\cdot)\|_{L^{2}(\Omega,\RR^{3\times 3})}\|d(t,\cdot)\|_{H_{0}^{1}(\Omega,\RR^3)}dt.\\
  \end{eqnarray*}
 An application of (\ref{theorem21spez}) along with (\ref{d-estimate}) furthermore yields
 \begin{eqnarray*}
  \Big|\int\limits_{0}^{\tau}(B)dt \Big|
  &\leq & 9\sum_{K=1}^{N}\mu_{K}^{[1]}\frac{\bar{C}_{2}}{\sqrt{\kappa(\alpha)}}(\bar{C}_{2}+\bar{L}_{1}S(T))\|h\|_{\infty}^{3}\\
  &&+ 9\frac{\bar{C}_{2}}{\sqrt{\kappa(\alpha)}}\Big(9M_{1}\sum_{K=1}^{N}\mu_{K}^{[2]}+\sum_{K=1}^{N}\mu_{K}^{[1]}\Big)\|h\|_{\infty}^{2}\int\limits_{0}^{\tau}\|d(t,\cdot)\|_{H_{0}^{1}(\Omega,\RR^3)}dt.\\
 \end{eqnarray*}
 Setting
 \begin{equation}\label{B3}
  B_{3}:= 9\sum_{K=1}^{N}\mu_{K}^{[1]}\frac{\bar{C}_{2}}{\sqrt{\kappa(\alpha)}}(\bar{C}_{2}+\bar{L}_{1}S(T))
 \end{equation}
 and
 \begin{equation}\label{B4}
  B_{4}:= 9\frac{\bar{C}_{2}}{\sqrt{\kappa(\alpha)}}(9M_{1}\sum_{K=1}^{N}\mu_{K}^{[2]}+\sum_{K=1}^{N}\mu_{K}^{[1]}),
 \end{equation}
 we obtain
 \begin{equation}\label{int-(B)}
  \Big|\int\limits_{0}^{\tau}(B)dt \Big| \leq B_{3}\|h\|_{\infty}^{3} + B_{4}\|h\|_{\infty}^{2}\int\limits_{0}^{\tau}\|d(t,\cdot)\|_{H_{0}^{1}(\Omega,\RR^3)}dt
 \end{equation}
 as according estimate for the $(B)$-term.\\
 Summarizing all our efforts so far, we obtain from (\ref{ungleichung-d}) with (\ref{int-(A)}) and (\ref{int-(B)})
 \begin{eqnarray*}
 \kappa(\alpha)\|d(\tau,\cdot)\|_{H_{0}^{1}(\Omega,\RR^3)}^{2} +  \rho\|\dot{d}(\tau,\cdot)\|_{L^{2}(\Omega,\RR^3)}^{2}
 &\leq & \frac{729\mu(\alpha)}{8}\eta M_{1}\int\limits_{0}^{\tau}\|d(t,\cdot)\|_{H_{0}^{1}(\Omega,\RR^3)}^{2}dt\\
 &&\hspace*{-2cm}+2B_{1}\|h\|_{\infty}^{3} + 2B_{2}(h)\|h\|_{\infty}^{\frac{3}{2}}\int\limits_{0}^{\tau}\|d(t,\cdot)\|_{H_{0}^{1}(\Omega,\RR^3)}dt\\
 &&\hspace*{-2cm}+2B_{3}\|h\|_{\infty}^{3} + 2B_{4}\|h\|_{\infty}^{2}\int\limits_{0}^{\tau}\|d(t,\cdot)\|_{H_{0}^{1}(\Omega,\RR^3)}dt
 \end{eqnarray*}
 and thusly
 \begin{eqnarray*}
  && \kappa(\alpha)\|d(\tau,\cdot)\|_{H_{0}^{1}(\Omega,\RR^3)}^{2} +  \rho\|\dot{d}(\tau,\cdot)\|_{L^{2}(\Omega,\RR^3)}^{2}\\
  &&\hspace*{1cm}\leq \frac{729\mu(\alpha)}{8\kappa(\alpha)}\eta M_{1}\int\limits_{0}^{\tau}\kappa(\alpha)\|d(t,\cdot)\|_{H_{0}^{1}(\Omega,\RR^3)}^{2} +  \rho\|\dot{d}(t,\cdot)\|_{L^{2}(\Omega,\RR^3)}^{2}dt\\
  &&\hspace*{1.5cm}+ 2(B_{1}+B_{3})\|h\|_{\infty}^{3} + 2(B_{2}(h)+B_{4}\|h\|_{\infty}^{\frac{1}{2}})\|h\|_{\infty}^{\frac{3}{2}}\frac{1}{\sqrt{\kappa(\alpha)}}\times\\
  &&\hspace*{2cm}\times\int\limits_{0}^{\tau}\bigg(\kappa(\alpha)\|d(t,\cdot)\|_{H_{0}^{1}(\Omega,\RR^3)}^{2} +  \rho\|\dot{d}(t,\cdot)\|_{L^{2}(\Omega,\RR^3)}^{2}\bigg)^{\frac{1}{2}}dt.\\
 \end{eqnarray*}
 The estimate above has now a form that we can apply Gronwall's lemma in the particular situation (\ref{gronwall-const}) with the settings 
 \[\psi := \kappa(\alpha)\|d(\tau,\cdot)\|_{H_{0}^{1}(\Omega,\RR^3)}^{2} +  \rho\|\dot{d}(\tau,\cdot)\|_{L^{2}(\Omega,\RR^3)}^{2},\]
 \[a := 2(B_{1}+B_{3})\|h\|_{\infty}^{3},\]
 \[b := \frac{729\mu(\alpha)}{8\kappa(\alpha)}\eta M_{1}>0\]
 and
 \[k:= 2(B_{2}(h)+B_{4}\|h\|_{\infty}^{\frac{1}{2}})\|h\|_{\infty}^{\frac{3}{2}}\frac{1}{\sqrt{\kappa(\alpha)}} . \]
 This yields for $\tau\in[0,T]$
 \[
  \kappa(\alpha)\|d(\tau,\cdot)\|_{H_{0}^{1}(\Omega,\RR^3)}^{2} +  \|\dot{d}(\tau,\cdot)\|_{L^{2}(\Omega,\RR^3)}^{2}
  \leq \bigg[\exp(\frac{1}{2}b\tau)a^{\frac{1}{2}} + \frac{k}{b}(\exp(\frac{1}{2}b\tau)-1)\bigg]^{2}
 \]
 and using the setting for the constants above 
 \begin{eqnarray*}
  \|d(\tau,\cdot)\|_{H_{0}^{1}(\Omega,\RR^3)} &\leq &\\
  &&\hspace*{-3cm} \Big( \exp \big(\frac{1}{2}b\tau \big)\sqrt{2(B_{1}+B_{3})}+ \big(\exp \big(\frac{1}{2}b\tau \big)-1\big)\frac{2}{b\sqrt{\kappa(\alpha)}}(B_{2}(h)+B_{4}\|h\|_{\infty}^{\frac{1}{2}})\Big)
  \frac{1}{\sqrt{\kappa(\alpha)}}\|h\|_{\infty}^{\frac{3}{2}}.
 \end{eqnarray*}
 Using the Cauchy Schwarz inequality and integrating over $\tau\in[0,T]$ delivers as in the proof of Theorem \ref{gateaux-stetig} 
 \begin{eqnarray}\label{dL2}
  \|d\|_{L^{2}(0,T;U)}^{2}
  &\leq & \\
  &&\hspace*{-2.5cm} 2\|h\|_{\infty}^{3}\Big(\frac{2(B_{1}+B_{3})}{\kappa(\alpha)}\int\limits_{0}^{T}\exp(b\tau)\,d\tau +
  \frac{4(B_{2}(h)+B_{4}\|h\|_{\infty}^{\frac{1}{2}})^{2}}{b^{2}\kappa(\alpha)^{2}}\int\limits_{0}^{T}(\exp(\frac{1}{2}b\tau)-1)^{2}\,d\tau\Big). \nonumber
 \end{eqnarray}
 In this way we obtain the final and desired estimate
 \begin{equation}\label{final-estimate}    \|d\|_{L^{2}(0,T;U)} \leq \bar{L}_{2} (h)\, \|h\|_{\infty}^{\frac{3}{2}} \end{equation}
 with
 \begin{equation*}
  \bar{L}_{2} (h):= 
  \sqrt{2}\Big(\frac{2(B_{1}+B_{3})}{\kappa(\alpha)}\exp(bT)T + \frac{4(B_{2}(h)+B_{4}\|h\|_{\infty}^{\frac{1}{2}})^{2}}{b^{2}\kappa(\alpha)^{2}}\big(\exp(\frac{1}{2}bT)-1\big)^{2}T\Big)^{\frac{1}{2}}>0.
  \end{equation*}
  We note, that because of (\ref{B-h-zero}), we have the convergence
  \begin{equation}\label{L2-convergence}
  \lim_{h\to 0} \bar{L}_2 (h) = 
  \sqrt{2}\Big(\frac{2(B_{1}+B_{3})}{\kappa(\alpha)}\exp(bT)T + \frac{4 \bar{B}_{2}^2}{b^{2}\kappa(\alpha)^{2}}\big(\exp(\frac{1}{2}bT)-1\big)^{2}T\Big)^{\frac{1}{2}}>0
  \end{equation}
  with $\bar{B}_2$ from (\ref{B-h-zero}). Applying Poincar\'{e}'s inequality and taking the convergence (\ref{L2-convergence}) as well as (\ref{final-estimate}) into account, 
  we find for $\|h\|_\infty$ sufficiently small an appropriate constant $L_2>0$ such that 
  \[\|d\|_{L^{2}(0,T;V)} \leq L_{2}\|h\|_{\infty}^{\frac{3}{2}}\] 
  is valid for all $h\to 0$. This is the assertion of Theorem \ref{thm-glm-konv} and the proof is complete.\\
\end{proof}
\end{appendix}


\section*{Acknowledgments}

The work was partly funded by German Science Foundation (Deutsche Forschungsgemeinschaft, DFG) under Schu 1978/4-2.\\

\bibliographystyle{siam}
\bibliography{forces-composites-bib,references-hyperelastic,references}


\end{document}